\documentclass[11pt]{amsart}
\usepackage{amssymb,amsmath,mathrsfs}
\usepackage{enumerate,enumitem,tikz}
\usepackage{hyperref,mathtools}

\usepackage[vcentermath]{youngtab}
\usepackage{xstring}
\usepackage{array,booktabs}

\usetikzlibrary{patterns}
\usetikzlibrary{decorations.pathreplacing}

\newtheorem{thm}{Theorem}[section]
\newtheorem{prop}[thm]{Proposition}

\newtheorem{lem}[thm]{Lemma}

\theoremstyle{definition}
\newtheorem{example}[thm]{Example}
\newtheorem*{remark}{Remark}

\numberwithin{equation}{section}

\newcommand{\Mo}{\mathfrak{M}}
\newcommand{\MoP}{\mathcal{M}}
\newcommand{\Rio}{\mathfrak{R}}
\newcommand{\RioP}{\mathcal{R}}
\newcommand{\Uu}{\mathsf{U}}
\newcommand{\Dd}{\mathsf{D}}
\newcommand{\Hh}{\mathsf{H}}
\newcommand{\LR}{\operatorname{LR}}
\newcommand{\SYT}{\operatorname{SYT}}
\newcommand{\RSYT}{\SYT^{\,\equiv}_{\le 3}}

\DeclarePairedDelimiter{\abs}{\lvert}{\rvert}

\DeclareMathOperator{\lrm}{\mathsf{lrmax}}

\def\oeis#1{\cite[#1]{Sloane}}

\newcommand\motzkinpath[3]{%
  \begin{scope}
    \def\diam{0.08}%
    \draw[help lines] (#1) -- ++({(#2)},0);
    \draw[line width=1pt] (#1) foreach \s in {#3} {-- ++(1,{\s-1})};
    \filldraw (#1) circle (\diam);
    \filldraw (#1) foreach \s in {#3} {++(1,{\s-1}) circle (\diam)};
  \end{scope}
}

\newcommand{\perm}[1]{%
  \def\Perm{#1}
  \StrSubstitute{\Perm}{,}{\,}
}

\newcommand{\plotpermutation}[1]{%
\def\Permutation{#1}

\StrDel{\Permutation}{,}[\OnelineP]
\StrLen{\OnelineP}[\N]

\def\rad{0.15}
\begin{scope}
\draw[gray!60] (1,1) grid (\N,\N);
\foreach [count=\i] \y in \Permutation {%
  \draw[fill,right=14pt,above=14pt] (\i,\y) circle (\rad);
  }
\end{scope}
}

\begin{document}
\title[Motzkin paths, 321-avoiding permutations, SYT with rows of equal parity]{Motzkin paths, 321-avoiding permutations, and standard Young tableaux with rows of equal parity}
\author[Ryan Amaral]{Ryan J. Amaral}
\address{Penn State Altoona\\ 3000 Ivyside Park\\ Altoona, PA 16601}
\email{rva5595@psu.edu}
\author[Juan Gil]{Juan B. Gil}
\address{Penn State Altoona\\ 3000 Ivyside Park\\ Altoona, PA 16601}
\email{jgil@psu.edu}
\author[Michael Weiner]{Michael D. Weiner}
\address{Penn State Altoona\\ 3000 Ivyside Park\\ Altoona, PA 16601}
\email{mdw8@psu.edu}

\begin{abstract}
Motzkin paths of length $n$ and standard Young tableaux (SYT) with $n$ cells and at most three rows are both counted by the Motzkin numbers, and many bijections between them are known. The Riordan numbers count the subfamilies of Riordan paths (Motzkin paths with no horizontal step on the $x$-axis) and of tableaux whose three row lengths have the same parity, but none of the known bijections restricts to these subfamilies. We introduce the set of $321$-avoiding permutations in which every left-to-right maximum is either a descent or a fixed point. This family is counted by the Motzkin numbers, and its fixed-point-free elements are the ``Riordan permutations'' of Callan. We give a bijection from Motzkin paths to these permutations under which Riordan paths correspond to Riordan permutations. We then give a bijection from these permutations to SYT of height at most three, obtained from Robinson--Schensted insertion followed by a parity correction, under which Riordan permutations correspond to tableaux with rows of equal parity and the number of left-to-right maxima becomes a simple tableau statistic. Via Dyck paths, we connect these objects to further families counted by the Riordan numbers, including derangements of genus zero and SYT of shape $(k,k,1^{n-2k})$.
\end{abstract}

\maketitle

\section{Introduction}

A {\em Motzkin path} of length $n$ is a lattice path from $(0,0)$ to $(n,0)$ with steps $\Uu=(1,1)$, $\Hh=(1,0)$, and $\Dd=(1,-1)$ that never goes below the $x$-axis. A {\em Riordan path} is a Motzkin path with no $\Hh$-step on the $x$-axis. It is known that Motzkin paths are counted by the sequence \oeis{A001006}:
\[ 1, 1, 2, 4, 9, 21, 51, 127, 323, 835, 2188, 5798, 15511, 41835, 113634, \dots, \]
and Riordan paths by the sequence \oeis{A005043}:
\[ 1, 0, 1, 1, 3, 6, 15, 36, 91, 232, 603, 1585, 4213, 11298, 30537,\dots, \]
both starting at $n=0$. Since the only Motzkin path of length $1$ is a single $\Hh$-step, which lies on the $x$-axis, there is no Riordan path of length $1$; the $3$ Riordan paths of length $4$ are $\Uu\Dd\Uu\Dd$, $\Uu\Uu\Dd\Dd$, and $\Uu\Hh\Hh\Dd$.

Motzkin numbers also count the standard Young tableaux ($\SYT$) with $n$ cells and at most three rows, and several bijections realizing this are known; see for instance Matsakis and Vandervelde \cite{MV22}. We denote this set by $\SYT_{\le3}(n)$. The Riordan numbers count a natural subfamily of these tableaux. Consider the partition set
\[ \Lambda^{\equiv}_{\le3}(n)
  = \bigl\{ \lambda = (\lambda_1,\lambda_2,\lambda_3) :
    \lambda_1 \ge \lambda_2 \ge \lambda_3 \ge 0,\ \textstyle\sum\limits_i \lambda_i = n,\
    \lambda_1 \equiv \lambda_2 \equiv \lambda_3 \!\! \pmod 2 \bigr\}, \]
and let
\[  \RSYT(n) = \bigsqcup_{\lambda \in \Lambda^{\equiv}_{\le3}(n)} \SYT(\lambda). \]
In other words, $\RSYT(n)$ is the set of standard Young tableaux with $n$ cells and at most three rows whose three row lengths all have the same parity, where empty rows are included and counted as even.

\begin{thm}\label{thm:known}
For every $n\ge 0$, we have $\abs{\RSYT(n)}=R_n$, the $n$-th Riordan number.
\end{thm}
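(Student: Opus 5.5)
The plan is to show that the numbers $a_n:=\abs{\RSYT(n)}$ satisfy the same recurrence as the Riordan numbers, namely
\[ a_n + a_{n+1} = M_n \quad (n\ge 0), \qquad a_0 = 1, \]
where $M_n=\abs{\SYT_{\le3}(n)}$ is the $n$-th Motzkin number, as recalled above. The Riordan numbers satisfy $R_n+R_{n+1}=M_n$ with $R_0=1$; this is a classical identity and I will quote it. Since the recurrence together with the initial value determines the sequence, this gives $a_n=R_n$ for all $n$. The initial value is immediate: the empty tableau has shape $(0,0,0)$, all of whose parts are even, so $a_0=1$.

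The core step is an explicit bijection
\[ \SYT_{\le3}(n)\setminus\RSYT(n)\;\longrightarrow\;\RSYT(n+1). \]
Let $T$ have shape $\lambda=(\lambda_1,\lambda_2,\lambda_3)$, with empty rows counted as even, and suppose the three parities are not all equal. Then exactly one row $i$ has parity different from the other two. Define $\phi(T)$ by appending a cell labelled $n+1$ at the end of row $i$.

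\emph{This is a valid tableau.} For $i=1$ there is nothing to check. For $i\ge2$, the parities of $\lambda_{i-1}$ and $\lambda_i$ differ, so $\lambda_{i-1}\ne\lambda_i$ and hence $\lambda_{i-1}>\lambda_i$. Thus the new cell sits at an outer corner, and rows and columns stay increasing because $n+1$ is the maximal entry. This also covers the case $i=3$ with $\lambda_3=0$.

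\emph{It lands in $\RSYT(n+1)$.} Flipping the parity of row $i$ makes all three rows agree in parity.

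\emph{It is invertible.} Given $T'\in\RSYT(n+1)$, the entry $n+1$ occupies a corner in some row $j$. Removing it gives a tableau $T\in\SYT_{\le3}(n)$ in which row $j$ alone has changed parity. So the parities of $T$ are not all equal, and row $j$ is the unique odd-one-out row, which forces $\phi(T)=T'$.

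Hence the removal map is a two-sided inverse of $\phi$. Counting then gives $M_n=a_n+(M_n-a_n)=a_n+a_{n+1}$.

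The only point needing care is the corner check when $i\ge2$, including the degenerate empty-row case, and it is handled by the parity argument above. If one prefers not to cite $R_n+R_{n+1}=M_n$, I would prove it the same way on paths. Given a Motzkin path of length $n$ that is not Riordan, take its last $\Hh$-step on the $x$-axis and replace it by $\Uu$, then append a final $\Dd$. Equivalently, write the path as $P\,\Hh\,Q$ with $Q$ a Riordan path and map it to $P\,\Uu\,Q'\,\Dd$, after checking that this is reversible. I expect this path bijection, rather than the tableau step, to be the main place where the bookkeeping could go wrong. Since the identity is standard, I would cite it.
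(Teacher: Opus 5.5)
Your argument is correct, and it takes a genuinely different route from the paper. The paper does not prove this statement itself. It attributes it to Okada (Pieri rules for orthogonal groups) and to Jagenteufel and Hemmer--Straub--Westrem. Its own contribution is a new bijective proof: the composition $\Phi\circ\alpha^{-1}:\RioP_n\to\RSYT(n)$ of Theorems~\ref{thm:path2perm_map} and~\ref{thm:main_bijection}. That proof requires the Robinson--Schensted analysis of Proposition~\ref{prop:aux}, the legality of the parity moves (M1) and (M2), and an explicit inverse.

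Your map $\phi$ is exactly the inverse of the paper's $\varrho$, with the same corner argument: the row above the odd-one-out row has the other parity, so it is strictly longer. The paper uses $\varrho$ only afterwards, to extend $\Phi$ to $\widetilde\Phi$. You observe that $\varrho$, combined with the classical counts $\abs{\SYT_{\le3}(n)}=M_n$ and $M_n=R_n+R_{n+1}$, already forces $\abs{\RSYT(n)}=R_n$ by induction from $a_0=1$.

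\emph{What each route buys.} Yours gives the statement in a few lines, with no insertion algorithm and no representation theory. Because the recursion subtracts, however, it yields no natural bijection with Riordan paths or Riordan permutations. It also does not give the refinement by $k$ (the correspondence $\lrm\mapsto\lambda_2+\chi$ of Theorem~\ref{thm:riordan_families}(f)). Nor does it give a Motzkin-level bijection that restricts to the Riordan level, which is the question the paper sets out to answer. The paper realizes the same identity $M_n=R_n+R_{n+1}$ bijectively on both sides, via $\hat\gamma$ on permutations and $\varrho$ on tableaux. It uses that identity in the opposite direction: not to count, but to extend the Riordan-level bijection at size $n+1$ to the non-Riordan part at size $n$.

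\emph{An error in your optional path bijection.} If you split at the \emph{last} $\Hh$-step on the axis, then $Q$ is Riordan, but $P$ is an arbitrary Motzkin path left at ground level. So $P\,\Uu\,Q\,\Dd$ need not be Riordan; for example, $\Hh\Hh\mapsto\Hh\Uu\Dd$. You must split at the \emph{first} such step, $w=w_0\,\Hh\,w_1$ with $w_0$ Riordan, and lift the arbitrary part: $w\mapsto w_0\,\Uu\,w_1\,\Dd$. This is the paper's $\gamma$. Since you cite the identity anyway, this does not affect your main proof.
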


This result is due to Okada \cite[Cor.~5.5(3)]{Oka16}, obtained from Pieri rules for the orthogonal groups. Jagenteufel \cite{Jag18} gave a bijective proof; in fact, as a byproduct of a correspondence for $\mathrm{SO}(3)$, she constructed an explicit bijection from Riordan paths of length $n$ to $\RSYT(n)$. Motivated by a question of Amdeberhan, the result was recently rediscovered and proved independently by Hemmer, Straub and Westrem~\cite{HSW26}.

Although several bijections between Motzkin paths of length $n$ and $\SYT_{\le3}(n)$ are available, restricting any of them to Riordan paths yields tableaux whose shapes are not all in $\Lambda^{\equiv}_{\le3}(n)$. The authors of \cite{HSW26,HSW26b} therefore asked for a bijection at the Motzkin level that naturally restricts to a bijection between the corresponding Riordan objects. Jagenteufel's map gives a bijection at the Riordan level, but it is not known to extend to a global bijection at the Motzkin level. In this paper we address the question via pattern-avoiding permutations.

Specifically, we consider the set $\Mo_n$ of $321$-avoiding permutations $\pi$ of size $n$ for which each left-to-right (LR) maximum $\pi_i$ satisfies $\pi_i > \pi_{i+1}$ or $\pi_i = i$; for $i=n$ only the latter can occur. Moreover, we let $\Rio_n$ be the set of fixed-point-free permutations in $\Mo_n$. It is known that $\abs{\Rio_n}=R_n$, and we will prove that $\abs{\Mo_n}=M_n$, the $n$-th Motzkin number. The set $\Rio_n$ coincides with the set of permutations listed in \oeis{A005043} by D.~Callan, namely the $321$-avoiding permutations in which every LR maximum is a descent. Indeed, a fixed point $j$ of a $321$-avoiding permutation $\pi$ is never a descent. We also point out that our set $\Mo_n$ is different from the set of Motzkin permutations studied by Elizalde and Mansour~\cite{EM05}.

Our main goal is to construct an explicit bijection $\Phi$ from $\Rio_n$ to $\RSYT(n)$ that extends to a bijection $\widetilde\Phi$ from $\Mo_n$ to $\SYT_{\le3}(n)$. The map $\Phi$ is obtained by applying Robinson--Schensted insertion to a permutation $\theta(\pi)$ built from the $\LR$ maxima of $\pi$, and then correcting the parity of the resulting shape by at most two elementary moves. Together with a natural bijection $\alpha$ between $\Mo_n$ and Motzkin paths of length $n$, which carries $\Rio_n$ onto the Riordan paths (Theorem~\ref{thm:path2perm_map}), this answers the question of \cite{HSW26,HSW26b}: the composition $\widetilde\Phi\circ\alpha^{-1}$ is a bijection from Motzkin paths of length $n$ to $\SYT_{\le3}(n)$ whose restriction to Riordan paths is a bijection onto $\RSYT(n)$ (Theorem~\ref{thm:path_syt}). Moreover, $\Phi$ carries the number of $\LR$ maxima of $\pi$, which corresponds to the number of $\Dd$-steps of $\alpha(\pi)$, to a simple statistic on $\RSYT(n)$. Composing further with standard bijections involving Dyck paths, we obtain explicit bijections to other families counted by the Riordan numbers, including derangements of genus zero and $\SYT$ of shape $(k,k,1^{n-2k})$.

The paper is organized as follows. In Section~\ref{sec:path2perm_bijections} we define the map $\alpha$, prove that it is a bijection between $\Mo_n$ and Motzkin paths that restricts to $\Rio_n$ and Riordan paths, and describe a bijection $\hat\gamma$ from $\Mo_n\setminus\Rio_n$ to $\Rio_{n+1}$ that realizes the identity $M_n=R_n+R_{n+1}$ at the level of permutations. In Section~\ref{sec:perm2SYT_bijections} we construct the map $\Phi$, prove that it is a bijection from $\Rio_n$ to $\RSYT(n)$ by giving an explicit inverse, and use $\hat\gamma$ to extend it to a bijection $\widetilde\Phi$ from $\Mo_n$ to $\SYT_{\le3}(n)$. In Section~\ref{sec:dyck_paths}, we combine these results into the desired bijection from Motzkin paths to tableaux, and we connect the Riordan families studied here, via Dyck paths, to other Riordan families in the literature (Theorem~\ref{thm:riordan_families}), illustrating all of the bijections on a single example.

\section{Bijections between paths and permutations}
\label{sec:path2perm_bijections}

Let $\MoP_n$ and $\RioP_n$ denote the sets of Motzkin and Riordan paths of length $n$, respectively, and let $\Mo_n$ be the set of $321$-avoiding permutations $\pi$ of $[n]$ for which each LR maximum is either a descent or a fixed point. We will use two standard facts about a $321$-avoiding permutation $\pi$: its entries that are not LR maxima form an increasing subsequence, and if $\pi_j=j$, then $\pi=\sigma\oplus 1\oplus\tau$ with $\sigma\in\mathcal{S}_{j-1}(321)$ and $\tau\in\mathcal{S}_{n-j}(321)$. In particular, a fixed point of $\pi$ is necessarily a LR maximum. Note also that $\Mo_n$ is closed under $\oplus$, since descents and fixed points of each component are preserved.

We start by defining a map $\alpha:\Mo_n\to\MoP_n$. Let $\pi\in\Mo_n$, let $\LR(\pi)=\{\pi_{i_1},\dots,\pi_{i_k}\}$ be the set of its LR maxima, and let $\lrm(\pi) = \abs{\LR(\pi)}$. In addition, let
\[ D(\pi)=\{\pi_i : \pi_i\in\LR(\pi),\, \pi_i\ne i\} \;\text{ and }\; U(\pi)=\{\pi_{i+1} : \pi_i\in D(\pi)\}. \]
If $\pi_i\in D(\pi)$, then $\pi_i>\pi_{i+1}$, so $\pi_{i+1}$ is not a LR maximum. Hence $U(\pi)\cap D(\pi)=\varnothing$ and $\abs{U(\pi)}=\abs{D(\pi)}$. We let $w=\alpha(\pi)$ be the word defined by
\[ w_v=\begin{cases}
     \Dd &\text{if } v\in D(\pi),\\
     \Uu &\text{if } v\in U(\pi),\\
     \Hh &\text{otherwise.}
   \end{cases} \]
Since each $\Dd$ at index $\pi_i$ is matched with a $\Uu$ at the smaller index $\pi_{i+1}$, every prefix of $w$ contains at least as many $\Uu$'s as $\Dd$'s, so $w$ is a Motzkin path. Moreover, if $\pi=\sigma\oplus\tau$ with $\abs{\sigma}=m$, then position $m$ is not a descent of $\pi$ (as $\pi_{m+1}>m\ge\pi_m$), so no pair $\{\pi_i,\pi_{i+1}\}$ with $\pi_i\in D(\pi)$ splits into two different components. It follows that if $\pi=\sigma_1\oplus\cdots\oplus\sigma_r$, then $\alpha(\pi)$ is the concatenation $\alpha(\pi) = \alpha(\sigma_1)\cdots\alpha(\sigma_r)$.

For example, the permutation $\pi=\perm{2,1,6,3,4,7,5,8,10,9}$ in $\Mo_{10}$ corresponds to the Motzkin path $\alpha(\pi) = \Uu\Dd\Uu\Hh\Uu\Dd\Dd\Hh\Uu\Dd$, shown in Figure~\ref{fig:motzkinP}. The path can be read (bottom to top) from the labels on the left of the plot of $\pi$.

\begin{figure}[ht]
\tikz[scale=0.4]{
\plotpermutation{2,1,6,3,4,7,5,8,10,9}
\foreach \y in {2,6,7,10} {\node[above=-1pt] at (0,\y) {\small\color{blue!70!black} $\Dd$};}
\foreach \y in {1,3,5,9} {\node[above=-1pt] at (0,\y) {\small\color{red!60!black} $\Uu$};}
\foreach \y in {4,8} {\node[above=-1pt] at (0,\y) {\small\color{gray} $\Hh$};}
} \\[10pt]
\tikz[scale=0.65]{\motzkinpath{0,0}{10}{2,0,2,1,2,0,0,1,2,0}}
\caption{Motzkin path corresponding to $\pi = \protect\perm{2,1,6,3,4,7,5,8,10,9}$.}
\label{fig:motzkinP}
\end{figure}

\begin{thm}\label{thm:path2perm_map}
The map $\alpha:\Mo_n\to\MoP_n$ is bijective, and its restriction to $\Rio_n$ provides a bijection between $\Rio_n$ and $\RioP_n$.
\end{thm}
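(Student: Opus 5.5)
I will prove bijectivity by constructing an explicit inverse $\beta:\MoP_n\to\Mo_n$ and checking $\beta\circ\alpha=\mathrm{id}$ and $\alpha\circ\beta=\mathrm{id}$. The guiding observation is that $\alpha(\pi)$ records the \emph{values} in $D(\pi)$ and $U(\pi)$, and a $321$-avoiding permutation is determined by its LR maxima (values and positions), since the remaining entries fill the remaining positions in increasing order.

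\textbf{Recovering $\pi$ from $w=\alpha(\pi)$.} Let $d_1<\dots<d_k$ be the $\Dd$-indices and $u_1<\dots<u_k$ the $\Uu$-indices of $w$. The LR maxima of $\pi$ increase from left to right. So the non-fixed LR maxima are $d_1<\dots<d_k$ in order of position, and each is immediately followed by its $\Uu$-partner. Because the entries that are not LR maxima increase, the partners also appear in increasing order of position. Hence $d_j$ is followed by $u_j$: the pairing is by rank, not the usual parenthesis matching. The ballot condition on prefixes of $w$ gives exactly $u_j<d_j$ for all $j$, which is what we need.

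The $\Hh$-indices $h$ split into two kinds:
\begin{itemize}
\item fixed points of $\pi$ (these are LR maxima);
\item non-LR-maxima that do not follow a $\Dd$.
\end{itemize}
I claim $h$ is a fixed point exactly when $\#\{\Uu\text{'s before }h\}=\#\{\Dd\text{'s before }h\}$, i.e.\ $h$ is an $\Hh$-step on the $x$-axis. Indeed, $\pi_h=h$ forces $\pi=\sigma\oplus1\oplus\tau$, so the values below $h$ occupy positions below $h$. Using the $\oplus$-compatibility already noted, $\alpha(\sigma)$ is a complete Motzkin path, so the step at $h$ is at height $0$. Conversely, I will show that if an $\Hh$ at height $0$ were a non-LR-maximum value $v$, then the values $<v$ could not all sit left of $v$ compatibly with the increasing order, which contradicts the ballot count.

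\textbf{Defining $\beta$.} Given $w$, form the pairs $(d_j,u_j)$ by rank. Mark the height-$0$ $\Hh$-steps as fixed points. Let the remaining $\Hh$'s, together with the $u_j$'s, form the increasing non-LR-maximum subsequence. Place the entries as follows:
\begin{itemize}
\item each fixed point $h$ goes at position $h$;
\item each $d_j$ goes immediately before $u_j$;
\item the other non-maxima fill the remaining positions increasingly.
\end{itemize}
Equivalently, decompose $w$ at its returns to the axis and its height-$0$ $\Hh$'s into primes $\Uu P\Dd$ and single $\Hh$'s. Build $\beta$ on each piece and take the $\oplus$-sum. This reduces everything to the elevated case $w=\Uu P\Dd$, where I must show there is a unique fixed-point-free $\pi\in\Mo_m$ with $\alpha(\pi)=w$.

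For such a piece, the word $w$ determines the set of values in $D(\pi)$ and the increasing sequence of non-maxima. We then read positions greedily: scanning values, each $d_j$ is placed right before $u_j$, and each plain $\Hh$ is placed in the next slot. I need to check three things:
\begin{itemize}
\item the result is $321$-avoiding, because it is a union of two increasing sequences;
\item the $d_j$ really are LR maxima, which should follow from $u_j<d_j$ and the ballot property;
\item no $d_j$ is a fixed point and the entries after $d_j$ form a descent, which holds since $u_j<d_j$.
\end{itemize}

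\textbf{Main obstacle.} The hardest step is verifying that the greedy placement yields valid positions, meaning the LR maxima are exactly the $d_j$ and no plain $\Hh$ becomes an LR maximum. The latter is equivalent to the height of $w$ being positive before that $\Hh$. I would attack this by induction on $n$, removing the last $\Dd$ together with its partner $u_k$. This corresponds to deleting the last non-fixed LR maximum and standardizing, and I would track how heights change.

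\textbf{Riordan restriction.} Once $\alpha$ is a bijection, the restriction is immediate from the fixed-point characterization above: $\pi$ has no fixed points iff $\alpha(\pi)$ has no $\Hh$ on the $x$-axis. Hence $\alpha(\Rio_n)=\RioP_n$.
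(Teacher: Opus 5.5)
\textbf{Gap: the surjectivity step is not proved.} Your strategy is the paper's: pair $d_j$ with $u_j$ by rank, list the non-$\Dd$ indices of $w$ in increasing order, and insert each $d_j$ immediately before $u_j$. Your rank-pairing argument is also the right ingredient for injectivity, which the paper leaves implicit.

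The step you call the main obstacle is that $\beta(w)\in\Mo_n$, with LR maxima exactly the $d_j$ and the height-$0$ $\Hh$-steps, so that $\alpha(\beta(w))=w$. That is the whole content of surjectivity, and you leave it unproved (``should follow'', ``I would attack this by induction''). The induction you sketch is also awkward. Deleting $d_k$ and $u_k$ lowers the path by one between them, so $\Hh$-steps at height $1$ there become height-$0$ steps, i.e.\ new fixed points of the smaller permutation. The hypothesis therefore does not transfer without extra bookkeeping. Similarly, the converse half of your fixed-point characterization (an $\Hh$ on the axis is a fixed point) is only gestured at. Yet your reconstruction of $\pi$, which places fixed points at those $\Hh$'s, depends on it.

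\textbf{A direct argument closes the gap.} No induction or prime decomposition is needed.
\begin{itemize}
\item Everything before $d_j$ in $\beta(w)$ is some $d_i$ with $i<j$ or a non-$\Dd$ index smaller than $u_j<d_j$. So $d_j$ is an LR maximum and a descent. Each $u_j$ is preceded by $d_j>u_j$, so no $u_j$ is an LR maximum.
\item If $w_v=\Hh$ at height $y>0$ with $j$ $\Uu$-steps before it, then fewer than $j$ $\Dd$-steps precede it, so $u_j<v<d_j$. Since $u_j$ precedes $v$ in the increasing list and $d_j$ sits just before $u_j$, $v$ is not an LR maximum.
\item If $w_v=\Hh$ at height $0$ with $j$ $\Uu$'s, $j$ $\Dd$'s and $m$ $\Hh$'s before it, then $v=2j+m+1$. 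Exactly $2j+m$ entries lie to its left ($u_1,\dots,u_j$, those $m$ indices, and $d_1,\dots,d_j$), so $\pi_v=v$.
\end{itemize}
This gives $\beta(w)\in\Mo_n$ and $\alpha(\beta(w))=w$.

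For $\beta(\alpha(\pi))=\pi$, use that in any $\pi\in\Mo_n$ all entries outside $D(\pi)$ appear in increasing order. The non-LR-maxima increase, and a fixed point $f$ has exactly the values below $f$ to its left. Together with your rank pairing, this reconstructs $\pi$ without knowing beforehand which $\Hh$'s are fixed points.

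The fixed points of $\pi=\beta(\alpha(\pi))$ are then exactly the height-$0$ $\Hh$-steps of $\alpha(\pi)$, because the $d_j$ are descents and fixed points are LR maxima. This gives both directions of the Riordan restriction at once.
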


\def\lab#1{\color{gray}_{#1}}

\begin{proof}
Let $w=w_1\cdots w_n\in\MoP_n$, and let $u_1<\dots<u_k$ and $d_1<\dots<d_k$ be the indices of its $\Uu$-steps and $\Dd$-steps, respectively. Since $w$ never goes below the $x$-axis, the $j$-th $\Uu$ precedes the $j$-th $\Dd$, that is, $u_j<d_j$ for all $j$. We define $\beta(w)$ to be the permutation obtained by listing the indices $v$ with $w_v\not=\Dd$ in increasing order and inserting $d_j$ immediately before $u_j$ for each $j$. For instance, for the path 
\[ w = \Uu_{\lab{1}}\Dd_{\lab{2}}\Uu_{\lab{3}}\Hh_{\lab{4}}\Uu_{\lab{5}}\Dd_{\lab{6}}\Dd_{\lab{7}}\Hh_{\lab{8}}\Uu_{\lab{9}}\Dd_{\lab{10}} \]
shown in Figure~\ref{fig:motzkinP}, we list $1,3,4,5,8,9$ and insert $2,6,7,10$ before $1,3,5,9$, recovering the permutation $\perm{2,1,6,3,4,7,5,8,10,9}$. Note that the pairing of $d_j$ with $u_j$ differs in general from the usual matching of up and down steps.

Since $\pi=\beta(w)$ is a shuffle of two increasing sequences, it is $321$-avoiding. Every entry of $\pi$ preceding $d_j$ is either some $d_i$ with $i<j$ or an index of $w$ smaller than $u_j<d_j$, so $d_j$ is a LR maximum, and it is a descent since it is immediately followed by $u_j$. In particular, no $d_j$ occupies the last position of $\pi$.

Now let $v$ be an index with $w_v\not=\Dd$, let $j$ be the number of $\Uu$-steps before $w_v$, and let $y$ be the height of $w$ before step $v$. If $y>0$, then there are less than $j$ $\Dd$-steps to the left of $w_v$, so $d_j>v$. Since $u_j<v$ and both entries are listed in increasing order in $\pi$, the entry $d_j$ (inserted before $u_j$) lies to the left of $v$, and therefore $v$ is not a LR maximum.

If $y=0$, then $w_v=\Hh$, and there must also be $j$ $\Dd$-steps before $w_v$. Let $m\ge 0$ be the number of $\Hh$-steps before $w_v$. Thus $v-1=j+j+m$, so $v=2j+m+1$. Now, since the set $\{p<v: w_p\not=\Dd\}$ has $j+m$ elements, and the entries
$d_1,\dots,d_j$ are inserted to the left of $u_1,\dots,u_j$, respectively, there are precisely $2j+m$ entries to the left of $v$ in $\pi$. Thus $v$ is in position $2j+m+1$ and is therefore a fixed point of $\pi$.

Note that the last position of $\pi$ requires no separate treatment: the last step of $w$ is either a $\Dd$-step, in which case $\pi_n=u_k$ is not a LR maximum, or an $\Hh$-step, which necessarily lies on the $x$-axis, in which case the previous paragraph gives $\pi_n=n$. In conclusion, every LR maximum of $\pi$ is either a descent or a fixed point, hence $\pi=\beta(w)\in\Mo_n$.

The statement about the restriction to $\Rio_n$ follows if we show that
\begin{equation}\label{eq:fp_preserving} 
 \pi\in\Mo_n \text{ has a fixed point} \iff \alpha(\pi) \text{ has an $\Hh$-step on the $x$-axis.}
\end{equation}
If $\pi$ has a fixed point $\pi_j=j$, then $\pi=\sigma\oplus 1\oplus\tau$ with $\sigma\in\Mo_{j-1}$ and $\tau\in\Mo_{n-j}$. Therefore $\alpha(\pi)=\alpha(\sigma)\,\Hh\,\alpha(\tau)$, and since $\alpha(\sigma)$ is a Motzkin path, it ends at height $0$, so the displayed $\Hh$-step lies on the $x$-axis. Conversely, if $\alpha(\pi)=M'\,\Hh\,M''$ with the displayed $\Hh$-step on the $x$-axis, then $M'$ ends at height $0$ and $M''$ starts there, so both are Motzkin paths, say of lengths $m$ and $n-1-m$. Let $\sigma=\beta(M')\in\Mo_{m}$ and $\tau=\beta(M'')\in\Mo_{n-1-m}$. Then $\sigma\oplus 1\oplus\tau\in\Mo_n$ and $\alpha(\sigma\oplus 1\oplus\tau)=M'\,\Hh\,M''=\alpha(\pi)$. The injectivity of $\alpha$ then implies $\pi=\sigma\oplus 1\oplus\tau$, so $\pi$ has a fixed point.
\end{proof}

\begin{remark}
A bijection between Riordan paths and Riordan permutations was given by Menashe~\cite{Men07}: a Riordan path with up-steps at indices $u_1<\dots<u_k$ and down-steps at indices $d_1<\dots<d_k$ is sent to the permutation $d_1\,u_1\,H_1\,d_2\,u_2\,H_2\cdots d_k\,u_k\,H_k$, where $H_i$ is the increasing list of indices of the horizontal steps lying between $u_i$ and $u_{i+1}$. This is precisely the restriction of $\beta$ to $\RioP_n$.
\end{remark}

Since $M_n=R_n+R_{n+1}$ (Bernhart~\cite{Ber99}), the set of Motzkin paths of length $n$ with at least one $\Hh$-step on the $x$-axis is equinumerous with $\RioP_{n+1}$. Here is a concrete bijection. If $w\in\MoP_n\backslash\RioP_n$, then $w$ can be written uniquely as $w=w_0\,\Hh\,w_1$, where the displayed $\Hh$ is the first $\Hh$-step on the $x$-axis, so that $w_0$ is a Riordan path (possibly empty) and $w_1$ is a Motzkin path (possibly empty). Define $\gamma:\MoP_n\backslash\RioP_n \to \RioP_{n+1}$ by
\begin{equation*}
  \gamma(w_0\,\Hh\,w_1) = w_0\,\Uu\,w_1\,\Dd.
\end{equation*}
Since $w_0$ has no $\Hh$-step on the $x$-axis and $w_1$ is lifted to height at least $1$, the path $w_0\,\Uu\,w_1\,\Dd$ is indeed a Riordan path of length $n+1$. Conversely, every $v\in\RioP_{n+1}$ ends with a $\Dd$-step, and its matching $\Uu$-step starts on the $x$-axis. Writing $v=v_0\,\Uu\,v_1\,\Dd$ accordingly, the prefix $v_0$ ends at height $0$ and is therefore a Riordan path, while $v_1$ is a Motzkin path. Hence the inverse of $\gamma$ is given by $v\mapsto v_0\,\Hh\,v_1$, and $\gamma$ is a bijection.

\smallskip
Because of \eqref{eq:fp_preserving}, the map $\gamma$ transports to permutations.
\begin{prop}\label{prop:gamma_hat}
The map $\hat\gamma=\alpha^{-1}\circ\gamma\circ\alpha:\Mo_n\backslash\Rio_n\to\Rio_{n+1}$, where the inner $\alpha$ is taken on $\Mo_n$ and the outer $\alpha^{-1}$ on $\MoP_{n+1}$, is a bijection.
\end{prop}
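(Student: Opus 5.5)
The plan is to compose three bijections whose properties are already established, and to check that the domains and codomains line up at each step.

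By Theorem~\ref{thm:path2perm_map}, $\alpha:\Mo_n\to\MoP_n$ is a bijection. By \eqref{eq:fp_preserving}, a permutation $\pi\in\Mo_n$ has a fixed point exactly when $\alpha(\pi)$ has an $\Hh$-step on the $x$-axis. Since $\Mo_n\setminus\Rio_n$ is the set of elements of $\Mo_n$ with at least one fixed point, $\alpha$ maps $\Mo_n\setminus\Rio_n$ into $\MoP_n\setminus\RioP_n$. The same equivalence, applied to $\Rio_n$, shows that $\alpha(\Rio_n)=\RioP_n$, which is the restriction statement of the theorem. As $\alpha$ is a bijection of the full sets, it follows that $\alpha(\Mo_n\setminus\Rio_n)=\MoP_n\setminus\RioP_n$ exactly. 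Hence the inner $\alpha$ restricts to a bijection $\Mo_n\setminus\Rio_n\to\MoP_n\setminus\RioP_n$.

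Next, $\gamma:\MoP_n\setminus\RioP_n\to\RioP_{n+1}$ was shown above to be a bijection, with explicit inverse $v_0\,\Uu\,v_1\,\Dd\mapsto v_0\,\Hh\,v_1$. Then, applying Theorem~\ref{thm:path2perm_map} with $n+1$ in place of $n$, the restriction of $\alpha$ gives a bijection $\Rio_{n+1}\to\RioP_{n+1}$. Its inverse, the restriction of $\alpha^{-1}=\beta$ to $\RioP_{n+1}$, is therefore a bijection $\RioP_{n+1}\to\Rio_{n+1}$.

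The composite $\hat\gamma=\alpha^{-1}\circ\gamma\circ\alpha$ is thus a composition of three bijections with matching domains and codomains, so it is a bijection $\Mo_n\setminus\Rio_n\to\Rio_{n+1}$. Its inverse is $\alpha^{-1}\circ\gamma^{-1}\circ\alpha$. There is no real obstacle here; the only step needing care is the equality $\alpha(\Mo_n\setminus\Rio_n)=\MoP_n\setminus\RioP_n$, rather than mere inclusion, and it follows from \eqref{eq:fp_preserving} together with the bijectivity of $\alpha$ on all of $\Mo_n$.
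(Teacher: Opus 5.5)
Your proposal is correct and follows essentially the same route as the paper: use \eqref{eq:fp_preserving} together with Theorem~\ref{thm:path2perm_map} to see that $\alpha$ restricts to bijections $\Mo_n\setminus\Rio_n\to\MoP_n\setminus\RioP_n$ and $\Rio_{n+1}\to\RioP_{n+1}$. Then $\hat\gamma$ is a composition of three bijections. Your extra check that $\alpha(\Mo_n\setminus\Rio_n)$ is all of $\MoP_n\setminus\RioP_n$, and not just a subset, is a detail the paper leaves implicit.
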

\begin{proof}
By \eqref{eq:fp_preserving}, $\alpha$ restricts to bijections $\Mo_n\backslash\Rio_n\to\MoP_n\backslash\RioP_n$ and $\Rio_{n+1}\to\RioP_{n+1}$, so $\hat\gamma$ is a composition of three bijective maps.
\end{proof}

The map $\hat\gamma$ can be described explicitly as follows. For $\pi\in\Mo_n\backslash\Rio_n$, let $j$ be its first fixed point, let $m_1<\dots<m_k$ be the non-fixed LR maxima of $\pi$ lying to the right of position $j$, and set $m_{k+1}=n+1$. Then $\hat\gamma(\pi)$ is obtained from $\pi$ by replacing each $m_i$ by $m_{i+1}$ for $1\le i\le k$, and then inserting $m_1$ immediately before the fixed point $\pi_j=j$.

Table~\ref{tab:gamma_hat4} lists $\hat\gamma$ on the six elements of $\Mo_4\backslash\Rio_4$. In each case $\alpha(\pi)$ has its first $\Hh$-step on the $x$-axis at index $j$, and $\gamma$ replaces that step by $\Uu$ and appends a $\Dd$.

\begin{table}[ht]
\centering
\renewcommand{\arraystretch}{1.15}
\begin{tabular}{c >{\small$}c<{$} c >{\small$}c<{$} c c}
\toprule
$\pi$ & j & $\alpha(\pi)$ & m_1<\dots<m_{k+1} & $\gamma(\alpha(\pi))$ & $\hat\gamma(\pi)$ \\
\midrule
$\perm{1,2,3,4}$ & 1 & $\Hh\Hh\Hh\Hh$ & 5   & $\Uu\Hh\Hh\Hh\Dd$ & $\perm{5,1,2,3,4}$ \\
$\perm{1,2,4,3}$ & 1 & $\Hh\Hh\Uu\Dd$ & 4<5 & $\Uu\Hh\Uu\Dd\Dd$ & $\perm{4,1,2,5,3}$ \\
$\perm{1,3,2,4}$ & 1 & $\Hh\Uu\Dd\Hh$ & 3<5 & $\Uu\Uu\Dd\Hh\Dd$ & $\perm{3,1,5,2,4}$ \\
$\perm{1,4,2,3}$ & 1 & $\Hh\Uu\Hh\Dd$ & 4<5 & $\Uu\Uu\Hh\Dd\Dd$ & $\perm{4,1,5,2,3}$ \\
$\perm{2,1,3,4}$ & 3 & $\Uu\Dd\Hh\Hh$ & 5   & $\Uu\Dd\Uu\Hh\Dd$ & $\perm{2,1,5,3,4}$ \\
$\perm{3,1,2,4}$ & 4 & $\Uu\Hh\Dd\Hh$ & 5   & $\Uu\Hh\Dd\Uu\Dd$ & $\perm{3,1,2,5,4}$ \\
\bottomrule
\end{tabular}
\medskip
\caption{The bijection $\hat\gamma:\Mo_4\setminus\Rio_4\to\Rio_5$.}
\label{tab:gamma_hat4}
\end{table}

\section{Bijections between permutations and SYT}
\label{sec:perm2SYT_bijections}

The purpose of this section is to define a map $\Phi$ from $\Rio_n$ to $\RSYT(n)$ that extends to a bijection from $\Mo_n$ to $\SYT_{\le3}(n)$. Our map requires the construction of an auxiliary SYT with at most three rows that is then adjusted to meet the equal parity requirement.

Let $\pi\in\Rio_n$ have LR maxima $m_1<\cdots<m_k=n$ at positions $1=i_1<\cdots<i_k$. Since every $m_j$ is a descent, there has to be at least one entry to the right of it.

For $1\le j\le k-1$, we set
\[ a_j = \text{largest entry of $\pi$ between $m_j$ and $m_{j+1}$}, \]
and let $a_k$ be the last entry of $\pi$. Since $\pi$ is $321$-avoiding, the entries of $\pi$ that are not LR maxima increase from left to right. Thus $a_j=\pi_{i_{j+1}-1}$, $a_k=\pi_n$, and $a_1<a_2<\cdots<a_k$. Moreover, $a_j<m_j$ for every $j$, since $a_j$ is not a LR maximum and lies to the right of $m_j$. 

Let
\[ A(\pi) = \{a_1,\dots,a_k\} = \{\pi_{i-1}: \pi_i\in \LR(\pi),\, i>1\}\cup \{\pi_n\}, \]
let $r=n-2k$, and let $b_1<\cdots<b_r$ be the elements of $\{1,\dots,n\}\setminus\left(A(\pi)\cup\LR(\pi)\right)$.

Let $\theta(\pi)$ be the permutation defined by
\begin{equation}\label{eq:theta_pi}
 \theta(\pi)=m_1\,a_1\,m_2\,a_2\cdots m_k\,a_k\,b_1 \cdots b_r. 
\end{equation}
Let $P(\theta(\pi))$ be the insertion tableau obtained through the Robinson--Schensted algorithm applied to $\theta(\pi)$. Since $\theta(\pi)$ is a shuffle of three increasing sequences, it avoids the pattern $4321$, and so $P(\theta(\pi))$ has at most three rows by Schensted's theorem \cite{Sch61}.

\begin{prop}\label{prop:aux}
Let $\pi\in\Rio_n$ with $k=\lrm(\pi)$. Then $P(\theta(\pi))$ is a $\SYT$ of shape $(k+s,\,k,\,t)$ for some $s,t\ge0$ with $s+t=n-2k$ and $t\le k$. Moreover,
\begin{enumerate}[label=\textup{(\roman*)},itemsep=3pt]
  \item row $3$ of $P(\theta(\pi))$ consists of $t$ elements of $\LR(\pi)$;
  \item row $2$ has the remaining $k-t$ elements of $\LR(\pi)$ together with $t$ elements of $A(\pi)$;
  \item row $1$ has the remaining $k-t$ elements of $A(\pi)$ together with $b_1, \dots, b_r$;
\end{enumerate}
The map $\pi\mapsto P(\theta(\pi))$ is injective.
\end{prop}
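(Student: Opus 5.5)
The plan is to follow the Robinson--Schensted insertion of $\theta(\pi)$ explicitly, in two phases: first the prefix $m_1a_1\cdots m_ka_k$, then the tail $b_1\cdots b_r$. Injectivity will then come from recovering the pair of sets $(\LR(\pi),A(\pi))$ from $P(\theta(\pi))$.

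\emph{Phase 1.} I will show by induction on $j$ that after inserting $m_1a_1\cdots m_ja_j$ the tableau has row $1$ equal to $a_1<\cdots<a_j$ and row $2$ equal to $m_1<\cdots<m_j$. The inductive step uses the inequalities already established before the proposition.
\begin{itemize}
\item Since $m_j>m_{j-1}>a_{j-1}$, the letter $m_j$ is appended to row $1$.
\item Next comes $a_j$. Because $a_{j-1}<a_j<m_j$, it bumps exactly $m_j$ from row $1$.
\item Since $m_j>m_{j-1}$, the bumped $m_j$ is appended to row $2$.
\end{itemize}
So after $2k$ letters the shape is $(k,k)$.

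\emph{Phase 2.} Here I insert the increasing word $b_1<\cdots<b_r$ and use the row bumping lemma: for consecutive insertions $x<y$, the bumping route of $y$ lies strictly to the right of that of $x$. Hence, in each row, the successive bumps occur in strictly increasing columns. I will use this to check three things.
\begin{itemize}
\item A $b$ is never bumped from row $1$, because later $b$'s are larger and sit to the right. So each $b_i$ either is appended to row $1$ or bumps some $a$.
\item A bumped $a$ coming from column $c$ of row $1$ lands in row $2$ at a column $\le c\le k$. So it always bumps an element of row $2$, and row $2$ keeps length $k$.
\item The element bumped out of row $2$ is one of the original $m$'s, since earlier insertions into row $2$ occupied columns strictly to the left. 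It is appended to row $3$, because these bumped $m$'s increase.
\end{itemize}
Let $t$ be the number of $b$'s that bump and $s=r-t$ the number that are appended. This yields the shape $(k+s,k,t)$ with $t\le k$, together with the content description (i)--(iii).

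\emph{Injectivity.} The key observation is that $\pi$ is determined by the two sets $\LR(\pi)$ and $A(\pi)$. Indeed, $\pi=m_1\,B_1\,m_2\,B_2\cdots m_k\,B_k$, where $B_j$ is the increasing list of entries not in $\LR(\pi)$ lying in the interval $(a_{j-1},a_j]$, with $a_0=0$. It therefore suffices to recover these two sets from $P=P(\theta(\pi))$. By (i)--(iii), row $3$ gives $t$ LR maxima, and $k$ is half of $n-(\lambda_1-\lambda_2)+\dots$; more simply, $k=\lambda_2$. It remains to separate the $a$'s from the $m$'s inside row $2$, and the $a$'s from the $b$'s inside row $1$.

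For this I plan to reverse the insertion of the $b$'s one at a time, largest first. The last $b$ that bumped corresponds to the last cell of row $3$. Reverse-bumping that cell's entry through rows $2$ and $1$ ejects a $b$ from row $1$, and this reverse procedure is canonical once we know which cells to remove. The remaining $b$'s appended to row $1$ are then the entries of row $1$ exceeding what Phase 1 could produce. My concrete plan is to show that, after undoing all $t$ row-$3$ cells by reverse bumping in the order dictated by the rightward-route property, row $1$ becomes $a_1<\cdots<a_k$ followed by the appended $b$'s, and row $2$ becomes $m_1<\cdots<m_k$. This works because the appended $b$'s occupy exactly the columns $>k$ of row $1$.

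The step I expect to be the main obstacle is justifying this uniqueness. One has to show that the recording data (which steps went to row $3$) is forced by $P$ alone, for instance by arguing that the ejected letters must come out increasing and that any other choice would violate $a_j<m_j$ or the interleaving $a_{j-1}<a_j$. Once that is settled, $\theta(\pi)$, hence $(\LR(\pi),A(\pi))$, hence $\pi$, is determined by $P$.
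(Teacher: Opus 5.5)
Your derivation of the shape $(k+s,k,t)$ and of (i)--(iii) is correct and is essentially the paper's argument. You track that the bumps into row~$2$ occur in strictly increasing columns, so only original $m$'s are ejected; the paper instead tracks that the $a$'s leave row~$1$ in increasing order. Either works. Reducing injectivity to recovering $(\LR(\pi),A(\pi))$ is also fine.

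\textbf{The gap is in the injectivity step.} You never show that the recording data is determined by $P(\theta(\pi))$; you explicitly leave this as the ``main obstacle.'' Moreover, the concrete procedure you describe is not the inverse of the insertion. You undo the row-$3$ cells from right to left while the appended $b$'s still sit in columns $>k$ of row~$1$, and then read row~$1$ as $a_1<\dots<a_k$ followed by those $b$'s. This can give the wrong answer.

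Take the paper's example $\pi=4\,1\,2\,8\,3\,9\,5\,6\,7$. Here $P(\theta(\pi))=1356\,|\,279\,|\,48$, $k=3$, $A(\pi)=\{2,3,7\}$, and $6$ is the appended $b$.
\begin{enumerate}
\item Reverse-bump $8$ with $6$ still present: $8$ displaces $7$ in row~$2$, and $7$ displaces $6$ (not $5$) in row~$1$.
\item Reverse-bump $4$: it displaces $2$ in row~$2$, and $2$ displaces $1$ in row~$1$.
\end{enumerate}
You end with rows $2357$ and $489$. Reading the first $k$ entries of row~$1$ then gives $\{2,3,5\}\neq A(\pi)$. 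Your suggested remedy (ejected letters coming out increasing, $a_j<m_j$, the interleaving $a_{j-1}<a_j$) is not the mechanism that settles this.

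\textbf{The missing idea is the other half of the row bumping lemma.} For consecutive insertions $x<x'$, the new box of $x'$ lies strictly to the right of and weakly above the new box of $x$. So in Phase~2, once some $b$ is appended to row~$1$, every later $b$ is appended as well. Hence the $t$ row-$3$ boxes are created first, left to right, and the $s$ row-$1$ boxes afterwards, left to right.

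Together with the rigid Phase~1, this shows that $Q(\theta(\pi))$ is the fixed tableau with
\begin{itemize}
\item $1,3,\dots,2k-1,\,2k+t+1,\dots,n$ in row~$1$,
\item $2,4,\dots,2k$ in row~$2$,
\item $2k+1,\dots,2k+t$ in row~$3$.
\end{itemize}
It therefore depends only on the shape of $P(\theta(\pi))$. Since Robinson--Schensted is a bijection, $P(\theta(\pi))$ determines $\theta(\pi)$, and hence $\pi$; this is the paper's argument.

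In your reverse-bumping language, the fix is to delete the $s$ entries in columns $>k$ of row~$1$ first, since they are the last letters inserted. Only then reverse-bump the row-$3$ cells from right to left. In the example this ejects $6$, then $5$, then $1$, and correctly leaves rows $237$ and $489$.
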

\begin{proof}
Inserting $m_1\,a_1\cdots m_k\,a_k$ into the empty tableau gives the array with rows $a_1,\dots,a_k$ and $m_1,\dots,m_k$: each $m_j$ is appended to row $1$, and then $a_j$, with $a_{j-1}<a_j<m_j$, bumps $m_j$ to the end of row 2. For the remaining entries $b_1<\dots<b_r$, each insertion creates one new box, and by the row bumping lemma (Knuth \cite[Thm.~1]{Knu70} or Fulton \cite[\S1.1]{Fulton97}), the box created by $b_{i+1}$ lies strictly to the right of the one created by $b_i$. Since the new boxes lie in distinct columns and rows 1 and 2 already occupy columns $1,\dots,k$, no new box can lie in row 2, so the new boxes appear in row 1 beyond column $k$ or in row 3 within the first $k$ columns, in left-to-right order. This gives the shape $(k+s,k,t)$ with $s+t=r$, $t\le k$, and shows that the $t$ boxes of row 3 are created before the $s$ boxes of row 1.

A $b_i$ placed in row $1$ is never bumped because later insertions are larger. Hence only elements of $A(\pi)$ leave row 1, and they do so in increasing order: if $a$ is bumped by $b$ and later $a'$ by $b'>b$, then $a'>b'>b$ was in row 1 when $a$ was chosen as the smallest entry exceeding $b$, so $a<a'$. Thus an element of $A(\pi)$ entering row 2 exceeds all elements of $A(\pi)$ already there and bumps an element of $\LR(\pi)$ to row 3. This proves (i)--(iii).

For the injectivity, note that the RS map $\theta(\pi)\mapsto (P(\theta(\pi)),Q(\theta(\pi)))$, where $Q(\theta(\pi))$ is the corresponding recording tableau, is a bijective map, and $Q(\theta(\pi))$ depends only on $(k,s,t)$: inserting $m_1a_1\cdots m_ka_k$ places $1,3,\dots,2k-1$ in row 1 and $2,4,\dots,2k$ in row 2. By the left-to-right order established above, the next $t$ insertions determine the third row, so $2k+1,\dots,2k+t$ go into row 3 of $Q(\theta(\pi))$, and $2k+t+1,\dots,n$ go into row $1$. In other words, the shape of $P(\theta(\pi))$ determines $Q(\theta(\pi))$, and therefore $\theta(\pi)$. 

Finally $\theta(\pi)$ determines $\pi$: the $m_j$ are its LR maxima, and the entries of $\pi$ strictly between $m_j$ and $m_{j+1}$ are exactly those $v\notin\{m_1,\dots,m_k\}$ with $a_{j-1}<v \le a_j$ (with $a_0=0$), listed in increasing order.
\end{proof}

\subsection*{Parity correction}
On the set $\SYT_{\le3}(n)$ of standard Young tableaux of shape $(\lambda_1,\lambda_2,\lambda_3)$ with $\lambda_1 \ge \lambda_2 \ge \lambda_3 \ge 0$ and $\lambda_1+\lambda_2+\lambda_3 = n$, we consider two moves:
\medskip
\begin{enumerate}[itemsep=3pt]
\item[(M1)] (applied when $\lambda_3>0$ and $\lambda_3\not\equiv n\!\pmod{2}$) Let $y$ be the last entry of row 3 and let $x$ be the largest entry of row 2 with $x<y$. Remove $y$ from row 3, replace $x$ by $y$ in row 2, and append $x$ to the end of row 1. The shape becomes $(\lambda_1+1,\lambda_2,\lambda_3-1)$.
\item[(M2)] (applied when $\lambda_2>0$ and $\lambda_2\not\equiv n\pmod{2}$) Move the last entry of row 2 to the end of row 1. The shape becomes $(\lambda_1+1,\lambda_2-1,\lambda_3)$.
\end{enumerate}
\medskip
Every element of $\RSYT(n)$ is an element of $\SYT_{\le3}(n)$ with the additional property that 
\[ \lambda_1 \equiv \lambda_2 \equiv \lambda_3 \equiv n\! \pmod{2}. \]
A nonempty row whose length is not congruent to $n$ mod 2 is said to be \emph{defective}.

For $T$ in $\SYT_{\le3}(n)$, we let $\Psi(T)$ be the result of applying (M1) if row 3 is defective, and then applying (M2) if the resulting row 2 is defective. For $\pi\in \Rio_n$, we then define $\Phi$ by
\begin{equation}\label{eq:mapPhi}
 \Phi(\pi) = \begin{cases}
 	P(\theta(\pi)) &\text{if its rows have equal parity,} \\
	\Psi(P(\theta(\pi))) &\text{otherwise.}
	\end{cases}
\end{equation}

\begin{example}
For $\pi = \perm{4,1,2,8,3,9,5,6,7}\in\Rio_9$, we have $\theta(\pi)=\perm{4,2,8,3,9,7,1,5,6}$, so 
\[ P(\theta(\pi)) =\, \small \young(1356,279,48) \;\leadsto\;  \Phi(\pi) =\, \small \young(13567,289,4). \]
In this case, (M1) was the only parity correction needed. Here is an example of size 7 that requires both moves (M1) and (M2):
\[ \perm{3,1,2,7,4,5,6} \;\xrightarrow{\; \theta\; }\; \perm{3,2,7,6,1,4,5}\; \xrightarrow{\text{ RS }}\; \small \young(145,26,37) \; \xrightarrow{\text{(M1)}}\; \young(1456,27,3) \; \xrightarrow{\text{(M2)}}\; \young(14567,2,3). \]
\end{example}

\medskip
\begin{thm}\label{thm:main_bijection}
For every $n\in\mathbb{N}$, the map $\Phi:\Rio_n \to \RSYT(n)$ is a bijection.
\end{thm}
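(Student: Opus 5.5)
We will prove Theorem~\ref{thm:main_bijection} in three steps: $\Phi$ is well defined, $\Phi$ is injective, and then a counting argument. Since $\abs{\Rio_n}=R_n$ is known and $\abs{\RSYT(n)}=R_n$ by Theorem~\ref{thm:known}, injectivity will be enough. An explicit inverse would be more satisfying, and the injectivity argument below is essentially how one would build it.

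\emph{Well-definedness.} By Proposition~\ref{prop:aux}, $P(\theta(\pi))$ has shape $(k+s,k,t)$ with $s+t=n-2k\equiv n \pmod 2$. Row $1$ is never defective by itself, since $\lambda_1=n-\lambda_2-\lambda_3$. So it suffices to fix the parities of rows $3$ and $2$, and there are four cases according to whether $t\equiv n$ and whether $k\equiv n$.
\begin{itemize}
\item If $t\not\equiv n$, then $t>0$. (M1) gives shape $(k+s+1,k,t-1)$, and if still $k\not\equiv n$, (M2) gives $(k+s+2,k-1,t-1)$.
\item If $t\equiv n$ but $k\not\equiv n$, then $k\ne t$, so $k-1\ge t$ and (M2) gives the partition $(k+s+1,k-1,t)$.
\end{itemize}
So the shapes come out right. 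What must still be checked is that the moved entries keep the tableau standard.

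In (M1), let $x$ be the largest entry of row $2$ below $y$. Then $x$ lies in column $\ge t$, because the entry above $y$ is smaller than $y$. Hence replacing $x$ by $y$ keeps row $2$ increasing and leaves no cell below it. The delicate point is that the entry appended to row $1$ (the $x$ of (M1), or the last entry of row $2$ in (M2)) must exceed the current last entry of row $1$. For this I would use the structure from Proposition~\ref{prop:aux}: row $2$ consists of LR maxima together with the last $t$ elements of $A(\pi)$ to leave row $1$, and row $1$ ends with $b$'s. I would show that, precisely in the defective situations, the largest relevant row-$2$ entry is a LR maximum exceeding every $b_i$ and every remaining $a_j$. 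For instance, $m_k=n$ lies in row $2$ or row $3$, and I would track where $n$ sits.

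\emph{Injectivity.} By Proposition~\ref{prop:aux}, $\pi\mapsto P(\theta(\pi))$ is injective. Both moves are reversible once we know which were applied:
\begin{itemize}
\item to undo (M2), move the last entry of row $1$ back to row $2$;
\item to undo (M1), take the last entry $x$ of row $1$, find its replacement $y$ in row $2$ as the smallest row-$2$ entry exceeding $x$, put $x$ back, and append $y$ to row $3$.
\end{itemize}
So it remains to show that the output $T=\Phi(\pi)$ determines which of the four cases occurred, equivalently the original pair $(k,t)$. The candidate preimage shapes of $\lambda$ are $\lambda$, $(\lambda_1-1,\lambda_2,\lambda_3+1)$, $(\lambda_1-1,\lambda_2+1,\lambda_3)$ and $(\lambda_1-2,\lambda_2+1,\lambda_3+1)$.

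I would characterize the image of $P\circ\theta$ intrinsically. By Proposition~\ref{prop:aux} its recording tableau is forced, so $P$ lies in the image exactly when reverse-bumping with that fixed $Q$ yields a word of the form $m_1a_1\cdots m_ka_k b_1\cdots b_r$ with the required inequalities and with $m_k=n$. I would then find a local test on $T$ that separates the four cases. The natural candidates are the position of $n$ and whether the last entry of row $1$ exceeds all entries of row $2$, together with a check that reversing a wrong set of moves never lands in the image of $P\circ\theta$.

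\emph{Main obstacle.} The main obstacle is this case-distinguishing step, together with the standardness of (M1) and (M2). Both hinge on the same fine information about which LR maxima end up at the ends of rows $2$ and $3$. I would first work out small cases (say $n\le 7$, using the examples above) to guess the separating statistic, and then prove it from the bumping description in Proposition~\ref{prop:aux}.
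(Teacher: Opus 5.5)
There is a genuine gap. Several parts of your plan are sound. Reducing bijectivity to injectivity via $\abs{\Rio_n}=R_n=\abs{\RSYT(n)}$ is legitimate. Your shape bookkeeping is correct, your check that (M1) keeps row~2 increasing is right, and each move is individually invertible as you describe. But both steps that carry the content are left as intentions, and the one mechanism you commit to is false.

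\textbf{Legality of the moves.} The missing structural fact is this. If $s>0$, some $b_i<\pi_n$ was appended to row~1, so $\pi_n=a_k$ had already been bumped into row~2. Since elements of $A(\pi)$ leave row~1 in increasing order and $\pi_n$ is the largest of them, it is the last entry ever to enter row~2. Hence the $\LR$ maximum $m$ that $\pi_n$ displaced is the last entry of row~3.

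Without this fact you cannot justify your assertion that $t\not\equiv n$ forces $t>0$. If $t=0$ and $n$ is odd, then $s$ is odd, and only $s>0\Rightarrow t>0$ rules out an empty third row of the wrong parity, which (M1) cannot repair.

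The same fact pins down (M1). Since $t\not\equiv n$ forces $s$ odd, we get $y=m$ and $x=\pi_n$, so (M1) just reverses that last bump. Thus the entry (M1) appends to row~1 is $\pi_n\in A(\pi)$, \emph{not} an $\LR$ maximum as you anticipate. For example, for $\pi=4\,1\,2\,8\,3\,9\,5\,6\,7$ one has $P(\theta(\pi))=1356\,|\,279\,|\,48$ and $x=7=\pi_9$. The move is legal because every other element of $A(\pi)$ and every $b_i$ is smaller than $\pi_n$.

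For (M2) one needs the last entry of row~1 to be below the last entry of row~2. In $P(\theta(\pi))$ this holds because either $s=0$, so the two entries share a column, or $s>0$ and row~2 contains $\pi_n$, which exceeds all of row~1. After (M1), row~1 ends in $\pi_n<m$, and $m$ now sits in row~2.

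\textbf{Injectivity.} Here you offer no separating test, and the output shape cannot serve as one: all four candidate preimage shapes you list are parity-consistent. The fact above supplies the test.

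First, (M2) was applied if and only if the last entry $e_1$ of row~1 of $\Phi(\pi)$ exceeds the last entry $e_2$ of row~2. Indeed, (M2) makes the old last entry of row~2 the last entry of row~1, while without (M2) the inequality just established gives $e_1<e_2$.

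After undoing (M2), (M1) was applied if and only if two conditions hold: row~1 is strictly longer than row~2, and the smallest row-2 entry exceeding the last entry of row~1 is larger than the last entry $e_3$ of row~3 (with $e_3=0$ for an empty row). If (M1) was applied, these three entries are $\pi_n$, $m$, and something below $m$ (or $e_3=0$). If (M1) was not applied, either $s=0$ and rows~1 and~2 have equal length, or $s\ge 2$ is even. In the latter case row~1 ends in $b_r<\pi_n$, so that smallest entry is at most $\pi_n<m=e_3$.

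With these two tests, undoing the detected moves recovers $P(\theta(\pi))$, and Proposition~\ref{prop:aux} gives injectivity. This is exactly the inverse algorithm in the paper. So once these facts are supplied, your counting route and the paper's explicit inverse amount to the same argument.
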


Before embarking on a proof, let us discuss why the map $\Phi$ is well-defined and let us work out some examples.

\begin{lem} \label{lem:legality}
For $\pi\in\Rio_n$, the moves \textup{(M1)} and \textup{(M2)} are legal, and $\Phi(\pi)\in\RSYT(n)$.
\end{lem}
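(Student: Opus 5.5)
By Proposition~\ref{prop:aux}, $T=P(\theta(\pi))$ has shape $(k+s,k,t)$ with $s+t=n-2k$ and $t\le k$. Since $n\equiv s+t\pmod 2$, we have $k+s\equiv n+t \pmod 2$. The plan is to do a parity case analysis on $(k,t)$ and check, in each case, that the moves used are legal and that the final shape lies in $\Lambda^{\equiv}_{\le3}(n)$.

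\emph{Parity cases.} Row 3 is defective exactly when $t>0$ and $t\not\equiv n$. Row 2 (of length $k\ge1$) is defective exactly when $k\not\equiv n$. After an (M1) the new row 3 has length $t-1\equiv n$ and row 1 has length $k+s+1\equiv n$. After an (M2) row 2 has length $k-1\equiv n$ and row 1 has length $k+s+2\equiv n$. In each case all row lengths end up congruent to $n$. The case $t=0$ with $t\not\equiv n$ never triggers (M1), since $0$ is even and counts as non-defective, so the parity requirement for empty row 3 is automatically satisfied. If no row is defective, then row 1 has length $k+s\equiv n+t\equiv n$, so no move is needed.

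\emph{Legality of (M1).} Let $y$ be the last entry of row 3. I need the following:
\begin{itemize}
\item there exists $x$ in row 2 with $x<y$;
\item replacing $x$ by $y$ keeps row 2 increasing and columns strict;
\item appending $x$ to row 1 keeps row 1 increasing;
\item after deleting $y$ from row 3, no column conflict arises.
\end{itemize}
The first point holds because the entry of row 2 directly above $y$ (in column $t$) is smaller than $y$. Thus $x$ sits in column $c\ge t$. The entry after $x$ in row 2 exceeds $y$ by maximality of $x$, so row 2 stays increasing. In column $c$, the entry of row 1 above $x$ is less than $x<y$. Below $x$, either nothing remains (if $c=t$, since $y$ is removed) or there is an empty cell (if $c>t$). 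So columns stay strict. The delicate point is appending $x$ to the end of row 1: I need $x>$ the last entry of row 1. For this I would use the structure from Proposition~\ref{prop:aux}.
\begin{itemize}
\item The elements $b_i$ in row 1 are the $s$ largest $b$'s, inserted last.
\item The elements of row 3 are LR maxima bumped by the $t$ earlier $b$'s.
\end{itemize}
This is the main obstacle. My first approach is to use the recording-tableau description: the last $s$ insertions $b_{t+1},\dots,b_r$ go to row 1 without bumping. Since (M1) is the inverse-like ``reverse bump'' of the last row-3 creation, I would try to show that $y$ is the LR maximum bumped when $b_t$ was inserted, and that $x$ is the $a$ that was bumped alongside it. Then the entry $x$ would exceed every $b_i$ and every remaining $a$ in row 1. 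This requires $b_{t+1},\dots,b_r<x$, which I expect to obtain from the fact that $b_t$ did not land at the end of row 1 while the subsequent $b$'s did. If this fails, I would instead argue by the condition ``$x$ is an element of $A(\pi)$ with $x<m$'' and the structure of $\pi$, showing that the $b$'s remaining in row 1 after column $k$ are all smaller than such $x$.

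\emph{Legality of (M2).} Here I only need the last entry of row 2 to exceed the last entry of row 1. This holds when row 2's last entry is a large LR maximum, in particular $m_k=n$, which lies in row 2 unless it was bumped to row 3. After (M1) the last entry of row 2 may be $y\in\LR(\pi)$, and I would check that $y$ exceeds everything in row 1 by a similar bumping argument. Column strictness after moving the entry is clear, since the cell below it is empty (row 3 has length at most row 2 minus one, or the cell is vacated).
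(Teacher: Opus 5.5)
Your proposal has a genuine gap, and it traces back to one structural fact you never establish.

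\textbf{Parity.} From $s+t\equiv n$ one gets $k+s\equiv k+n+t\pmod 2$, not $n+t$. So claims such as ``after (M1) row~1 has length $k+s+1\equiv n$'' fail when $k\not\equiv n$; they only become true once (M2) is also applied. With the corrected congruence, every case except $t=0$ with $n$ odd works out. Your treatment of that case is wrong. In $\Lambda^{\equiv}_{\le3}(n)$ an empty row counts as even, so $\lambda_3=0$ forces $\lambda_1,\lambda_2$ even, hence $n$ even. If $t=0$ and $n$ were odd, then $s=n-2k$ is odd and (M1) is never triggered. Whether or not (M2) applies, the output has $\lambda_1$ even, $\lambda_2$ odd and $\lambda_3=0$, so it is not in $\RSYT(n)$. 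You must therefore prove that this case cannot occur, i.e.\ that $s>0$ forces $t>0$.

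\textbf{The missing fact.} The idea the paper uses is that every $b_i$ is smaller than $\pi_n=a_k$. The non-LR-maxima of a $321$-avoiding permutation increase, and $\pi_n$ is the last of them, because LR maxima of $\pi\in\Rio_n$ are descents. Hence no $b$ can be appended to row~1 while $\pi_n$ is still there. So if $s>0$, some $b$ bumped $\pi_n$ out of row~1, and that $b$'s new box lies in row~3, giving $t>0$. Since elements of $A(\pi)$ leave row~1 in increasing order, $\pi_n$ is the last to leave; it is bumped by $b_t$, and nothing is bumped afterwards.

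\textbf{Legality of (M1).} This fact resolves your ``main obstacle.'' (M1) is applied only when $s\equiv n-t$ is odd, so $s>0$. Then $y$ is the LR maximum that $\pi_n$ bumped, and $x=\pi_n$, which exceeds every entry of row~1. Your proposed justification (that $b_t$ bumped while $b_{t+1},\dots,b_r$ did not) does not by itself give $b_r<x$. You need $x=\pi_n$ together with $b_r<\pi_n$.

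\textbf{Legality of (M2).} The same fact is needed when (M2) is applied directly to $T$, and your remark about $m_k=n$ does not settle it. For $\pi=3\,1\,2\,7\,4\,5\,6$, the last entry of row~2 of $T$ is $\pi_n=6$, with $7$ in row~3. The correct argument splits on $s$:
\begin{itemize}
\item if $s=0$, the last entries of rows~1 and~2 share column $k$;
\item if $s>0$, the entry $\pi_n$ lies in row~2 and exceeds all of row~1.
\end{itemize}
After (M1) the comparison is immediate, since $y>x$ now sits in row~2. You should also justify that row~3 has length at most $k-1$ when (M2) is applied to $T$. This follows from $t\le k$ and $t\equiv n\not\equiv k$ when $t>0$.
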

\begin{proof}
For $\pi\in\Rio_n$ we let $\theta(\pi)$ be as in \eqref{eq:theta_pi} and let $T=P(\theta(\pi))$ be the corresponding insertion tableau. By Proposition~\ref{prop:aux}, $T$ has shape $(k+s,k,t)$ for some $s,t\ge0$ with $s+t=n-2k\equiv n\!\pmod{2}$ and $t\le k$. If $s>0$, some $b_i<\pi_n$ was appended to row~1, so $\pi_n=a_k$ must have been pushed to row~2; being the largest element of $A(\pi)$, it was the last to leave row~1, and no further bumping into row~2 occurs. Hence, for $s>0$, $\pi_n$ lies in row~2, the $\LR$ maximum $m$ it bumped is the last entry of row~3, and $t>0$. In all cases the last entry of row~1 is smaller than the last entry of row~2: for $s=0$ they share a column, and for $s>0$ the latter is at least $\pi_n$, which exceeds every entry of row~1.

If $T\in \RSYT(n)$, then $T=\Phi(\pi)$ and we are done. If $t>0$ and $t\not\equiv n\pmod{2}$, then $s$ must be odd. In this case, we perform (M1) on $T$ and let $V$ be the resulting tableau. This step moves $m$ from row 3 to row 2 at the current position of $\pi_n$ (reverting the forward RS insertion), and it moves $\pi_n$ from row 2 to the end of row 1. Since every entry in row 1 of $T$ is smaller than $\pi_n$, we have that $V$ is a legal $\SYT$ of shape $(k+s+1,k,t-1)$. As a consequence of (M1), the last entries of row 1 and row 3 of $V$ are both less than $m$.
 
Finally, suppose $k\not\equiv n\!\pmod 2$ (in particular $k>0$), and let $W$ denote $V$ if \textup{(M1)} was applied and $T$ otherwise. In either case, row~2 of $W$ has length $k$ and row~3 has length $t'\equiv n\pmod 2$, where $t'=t-1$ or $t'=t$. Since $t\le k$ and $t'\not\equiv k\!\pmod 2$, we get $t'\le k-1$, so removing the last box of row~2 leaves a valid shape. Let $x$ be the last entry of row~2 of $W$. Then $x$ is larger than the last entry of row~1 of $W$: for $W=T$ this was observed at the beginning of the proof, and for $W=V$ it follows from $x\ge m>\pi_n$. Hence (M2) is legal, and it yields a standard Young tableau of shape $(k+s+2,k-1,t-1)$ or $(k+s+1,k-1,t)$, all of whose row lengths have the parity of $n$.

In conclusion, $\Phi(\pi)\in\RSYT(n)$. Moreover, since row~2 is increasing, the last entry of row~1 of $\Phi(\pi)$ exceeds the last
entry of row~2 if \textup{(M2)} was applied, and is smaller than it otherwise (for $T$ by the initial observation, for $V$ because
$\pi_n<m$). This is what allows step~$(i)$ of the inverse algorithm below to detect whether (M2) was applied.
\end{proof}

\subsection*{Proof of Theorem~\ref{thm:main_bijection}}
We proceed by explicitly providing the inverse map $\Phi^{-1}$. Let $T$ be a tableau in $\RSYT(n)$ of shape $(\lambda_1,\lambda_2,\lambda_3)$. Whenever $\lambda_i>0$, we let $e_i$ denote the last entry of row $i$. If $\lambda_i=0$, set $e_i=0$. The map $\Phi^{-1}$ is defined by the following algorithm:
\begin{enumerate}[label=$(\roman*)$]
\item If $e_1<e_2$, set $T_2=T$. If $e_1>e_2$, then reverse (M2): move $e_1$ to the end of row 2 and denote the resulting $\SYT$ by $T_2$. 

\item Let $(\lambda_1',\lambda_2',\lambda_3')$ be the shape of $T_2$, let $e_1'$ be the last entry of row 1 of $T_2$, and let $m=\min\{x \text{ in row 2 of } T_2 : x>e_1'\}$. If $\lambda_1'=\lambda_2'$ or $m<e_3$, set $T_{2,1}=T_2$. If $\lambda_1' > \lambda_2'$ and $m>e_3$, then reverse (M1): move $e_1'$ to row 2 of $T_2$ at the current position of $m$, move $m$ to the end of row 3, and denote the resulting tableau by $T_{2,1}$. In either case, $T_{2,1}$ is a $\SYT$ of shape $(k+s,k,t)$ with some $s,t\ge 0$.

\item Let $R_{2,1}$ be the (recording) tableau with entries $1,3,\dots,2k-1, 2k+t+1,\dots,n$ in row 1, entries $2,4,\dots,2k$ in row 2, and entries $2k+1,\dots,2k+t$ in row 3, and let $\pi_0$ be the permutation corresponding to the pair $(T_{2,1},R_{2,1})$ via Robinson-Schensted.
\item By construction, $\pi_0$ is of the form \eqref{eq:theta_pi}, so we let $\pi = \Phi^{-1}(T)$ be the unique Riordan permutation such that $\pi_0=\theta(\pi)$. \qed
\end{enumerate}

\begin{example}
Consider $T=1267\,|\,35\,|\,48 \in \RSYT(8)$. The first two steps of the above algorithm give
\[ \young(1267,35,48) \;\xrightarrow{\; (i)\; }\; \young(126,357,48) \;\xrightarrow{\; (ii)\; }\; \young(126,357,48)\,, \]
and by step $(iii)$,
\[ \left(\small \,\young(126,357,48)\,,\; \young(135,246,78)\,\right) \;\xrightarrow{\; \text{RS}^{-1}\; }\; \pi_0 = \perm{4,1,5,3,8,7,2,6}. \]
Finally, $\Phi^{-1}(T) = \perm{4,1,5,2,3,8,6,7}$.
\end{example}

\begin{example} 
All Riordan permutations of size 5 correspond to $\SYT$ of shape $(3,1,1)$, and five of them require a parity correction on their insertion tableau (see Table~\ref{tab:Phi5}).
\end{example}

\begin{table}[ht]
\centering
\begin{tabular}{w{c}{4em} w{c}{6em} w{l}{5em} l}
\toprule
$\pi$ & $\theta(\pi)$ & $P(\theta(\pi))$ & \ $\Phi(\pi)$ \\
\midrule \\[-10pt]
$\perm{5,1,2,3,4}$ & $\perm{5,4,1,2,3}$ & $\scriptsize\young(123,4,5)$ & $\scriptsize\young(123,4,5)$ \\[12pt]
$\perm{4,1,2,5,3}$ & $\perm{4,2,5,3,1}$ & $\scriptsize\young(13,25,4)$ & $\scriptsize\young(135,2,4)$ \\[12pt]
$\perm{3,1,5,2,4}$ & $\perm{3,1,5,4,2}$ & $\scriptsize\young(12,34,5)$ & $\scriptsize\young(124,3,5)$ \\[12pt]
$\perm{4,1,5,2,3}$ & $\perm{4,1,5,3,2}$ & $\scriptsize\young(12,35,4)$ & $\scriptsize\young(125,3,4)$ \\[12pt]
$\perm{2,1,5,3,4}$ & $\perm{2,1,5,4,3}$ & $\scriptsize\young(13,24,5)$ & $\scriptsize\young(134,2,5)$ \\[12pt]
$\perm{3,1,2,5,4}$ & $\perm{3,2,5,4,1}$ & $\scriptsize\young(14,25,3)$ & $\scriptsize\young(145,2,3)$ \\[10pt]
\bottomrule
\end{tabular}
\bigskip
\caption{The map $\Phi:\Rio_5\to \RSYT(5)$.}
\label{tab:Phi5}
\end{table}

We conclude this section with an extension of the map $\Phi:\Rio_n \to \RSYT(n)$ from \eqref{eq:mapPhi} to the set $\Mo_n$ of ``Motzkin'' permutations as defined at the beginning of Section~\ref{sec:path2perm_bijections}. Recall that $\Mo_n$ consists of $321$-avoiding permutations of $[n]$ for which each of their LR maxima is either a descent or a fixed point.

Observe that the map $\varrho:\RSYT(n+1)\to \SYT_{\le3}(n)\setminus \RSYT(n)$, defined by removing the entry $n+1$, is bijective. For the inverse, note that since $\sum \lambda_i=n$, the number of rows $\not\equiv n\!\pmod 2$ is even, so a tableau outside $\RSYT(n)$ has exactly one row $\equiv n\!\pmod 2$, and entry $n+1$ can be appended only there (the row above, if there is one, has opposite parity and is therefore strictly longer).

With that in mind, and using the map $\hat\gamma$ from Proposition~\ref{prop:gamma_hat}, we now define
\begin{equation}\label{eq:mapPhi_tilde}
 \widetilde\Phi(\pi) = \begin{cases}
 	\Phi(\pi) &\text{if }\pi\in\Rio_n, \\
	(\varrho\circ\Phi\circ\hat\gamma)(\pi) &\text{if } \pi\in\Mo_n\backslash\Rio_n.
	\end{cases}
\end{equation}

\begin{thm}\label{thm:perm3syt_map}
The map $\widetilde\Phi: \Mo_n \to \SYT_{\le3}(n)$ is bijective, and its restriction to $\Rio_n$ provides a bijection between $\Rio_n$ and $\RSYT(n)$.
\end{thm}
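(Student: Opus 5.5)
The plan is to decompose both the domain and the codomain into two disjoint pieces and check that $\widetilde\Phi$ maps each piece bijectively onto the corresponding piece. On the permutation side,
\[ \Mo_n = \Rio_n \sqcup (\Mo_n\setminus\Rio_n), \]
and on the tableau side,
\[ \SYT_{\le3}(n) = \RSYT(n)\sqcup\bigl(\SYT_{\le3}(n)\setminus\RSYT(n)\bigr). \]

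On the first piece $\widetilde\Phi=\Phi$. By Theorem~\ref{thm:main_bijection}, $\Phi$ is a bijection from $\Rio_n$ onto $\RSYT(n)$. This already gives the second assertion of the theorem.

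On the second piece $\widetilde\Phi=\varrho\circ\Phi\circ\hat\gamma$, and we check the three factors in turn.
\begin{enumerate}
\item By Proposition~\ref{prop:gamma_hat}, $\hat\gamma$ is a bijection $\Mo_n\setminus\Rio_n\to\Rio_{n+1}$.
\item By Theorem~\ref{thm:main_bijection} applied with $n+1$ in place of $n$, $\Phi$ is a bijection $\Rio_{n+1}\to\RSYT(n+1)$.
\item $\varrho$ is a bijection $\RSYT(n+1)\to\SYT_{\le3}(n)\setminus\RSYT(n)$.
\end{enumerate}
Hence the composition is a bijection from $\Mo_n\setminus\Rio_n$ onto $\SYT_{\le3}(n)\setminus\RSYT(n)$. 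Since the two images are disjoint and together cover $\SYT_{\le3}(n)$, $\widetilde\Phi$ is bijective.

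The only step needing care is the bijectivity of $\varrho$, which the paper merely sketches. For $T\in\RSYT(n+1)$ of shape $(\lambda_1,\lambda_2,\lambda_3)$, all $\lambda_i\equiv n+1\pmod 2$, and the entry $n+1$ sits at the end of some row $i$ that is a removable corner. Removing it gives a standard tableau with $n$ cells in which row $i$ has parity $n$. Every other nonempty row has parity $n+1$, and row $1$ is always nonempty when $n\ge1$. So the result has a defective row and lies outside $\RSYT(n)$. (Empty rows count as even; $n=0$ is a trivial check.)

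Conversely, take $T'\in\SYT_{\le3}(n)\setminus\RSYT(n)$. Its rows not congruent to $n$ are even in number, and there is at least one, so there are exactly two. Hence exactly one row $i$ has length $\equiv n\pmod2$, with empty rows counted as even. For $i>1$, row $i-1$ has the opposite parity, so $\lambda_{i-1}\ne\lambda_i$ and thus $\lambda_{i-1}>\lambda_i$. Therefore appending $n+1$ to row $i$ yields a valid standard tableau, and all its row lengths become $\equiv n+1$. This is the unique position producing an element of $\RSYT(n+1)$, so the append map is a two-sided inverse of $\varrho$.

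I expect no serious obstacle, since the heavy lifting is done by Theorem~\ref{thm:main_bijection} and Proposition~\ref{prop:gamma_hat}. The main points to verify carefully are the parity bookkeeping for $\varrho$, including the empty-row convention, and that Theorem~\ref{thm:main_bijection} is available for all sizes, in particular for $n+1$.
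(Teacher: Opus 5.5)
Your proof is correct and is essentially the paper's argument. You split off $\Rio_n$, where $\widetilde\Phi=\Phi$, and on $\Mo_n\setminus\Rio_n$ you compose the bijections $\hat\gamma$, $\Phi$ at size $n+1$, and $\varrho$, justifying $\varrho$ by the same parity count the paper sketches. One small slip: your claim that the result of removing $n+1$ ``has a defective row'' fails in the one-row case (shape $(n+1,0,0)$ with $n$ odd). The conclusion still holds there, since the two rows other than row~$i$, empty ones counted as even, keep parity $n+1$.
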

\begin{proof}
By Theorem~\ref{thm:main_bijection}, the map $\Phi:\Rio_n \to \RSYT(n)$ is a bijection. Moreover, the map 
\[ \Mo_n\backslash\Rio_n \;\xrightarrow{\; \hat\gamma\;}\; \Rio_{n+1} \;\xrightarrow{\;\Phi\;}\; \RSYT(n+1) 
\;\xrightarrow{\; \varrho\;}\; \SYT_{\le3}(n)\backslash \RSYT(n) \]
is a composition of bijective maps.
\end{proof}

We finish this section with an example that illustrates the algorithm defined by $\widetilde\Phi^{-1}$. Suppose $T$ is the $\SYT$ given by $T = 13589 \,|\, 247A \,|\, 6$, where $A=10$. Since $T\not\in\RSYT$, we need to revert the map $\varrho\circ\Phi\circ\hat\gamma$. Letting $B=11$, we get
\[ {\small\young(13589,247A,6)} \xrightarrow{\; \varrho^{-1}\;} {\small\young(13589,247AB,6)} \xrightarrow{\; \Phi^{-1}\;} \perm{2,1,6,3,4,7,5,A,8,B,9} \xrightarrow{\; \hat\gamma^{-1}\;} \perm{2,1,6,3,4,7,5,8,A,9}. \]
This permutation corresponds to the Motzkin path $\Uu\Dd\Uu\Hh\Uu\Dd\Dd\Hh\Uu\Dd$ shown in Figure~\ref{fig:motzkinP}.

\section{Summary and further bijections via Dyck paths}
\label{sec:dyck_paths}

Combining Theorem~\ref{thm:path2perm_map} and Theorem~\ref{thm:perm3syt_map}, we obtain a bijection from Motzkin paths to $\SYT$ of height at most 3 that restricts to the corresponding Riordan subsets.

\begin{table}[ht!]
\centering
\renewcommand{\arraystretch}{1.15}
\begin{tabular}{w{c}{4em} w{c}{8em} w{l}{3.5em}}
\toprule
$\alpha(\pi)$ & $\pi$ & \ $\widetilde\Phi(\pi)$ \\
\midrule
\tikz[scale=0.35,baseline=0]{\motzkinpath{0,0}{4}{1,1,1,1}} & $\perm{1,2,3,4}$ & $\scriptsize\young(123,4)$ \\[8pt]
\tikz[scale=0.35,baseline=0]{\motzkinpath{0,0}{4}{1,1,2,0}} & $\perm{1,2,4,3}$ & $\scriptsize\young(13,2,4)$ \\[12pt]
\tikz[scale=0.35,baseline=0]{\motzkinpath{0,0}{4}{1,2,0,1}} & $\perm{1,3,2,4}$ & $\scriptsize\young(124,3)$ \\[8pt]
\tikz[scale=0.35,baseline=0]{\motzkinpath{0,0}{4}{1,2,1,0}} & $\perm{1,4,2,3}$ & $\scriptsize\young(12,3,4)$ \\[12pt]
\tikz[scale=0.35,baseline=0]{\motzkinpath{0,0}{4}{2,0,1,1}} & $\perm{2,1,3,4}$ & $\scriptsize\young(134,2)$ \\[8pt]
\tikz[scale=0.35,baseline=0]{\motzkinpath{0,0}{4}{2,1,0,1}} & $\perm{3,1,2,4}$ & $\scriptsize\young(14,2,3)$ \\[20pt]
\tikz[scale=0.35,baseline=0]{\motzkinpath{0,0}{4}{2,0,2,0}} & $\perm{2,1,4,3}$ & $\scriptsize\young(13,24)$ \\[6pt]
\tikz[scale=0.35,baseline=5]{\motzkinpath{0,0}{4}{2,2,0,0}} & $\perm{3,1,4,2}$ & $\scriptsize\young(12,34)$ \\[8pt]
\tikz[scale=0.35,baseline=1]{\motzkinpath{0,0}{4}{2,1,1,0}} & $\perm{4,1,2,3}$ & $\scriptsize\young(1234)$ \\[1pt]
\bottomrule
\end{tabular}
\bigskip
\caption{The bijections $\MoP_4\leftrightarrow\Mo_4\leftrightarrow\SYT_{\le3}(4)$. The last three rows show the restriction $\RioP_4\leftrightarrow\Rio_4\leftrightarrow\RSYT(4)$.}
\label{tab:path_perm_syt}
\end{table}

\begin{thm} \label{thm:path_syt}
The map $\widetilde\Phi \circ \alpha^{-1}:\MoP_n \to \SYT_{\le3}(n)$ is bijective, and its restriction to $\RioP_n$ provides a bijection between $\RioP_n$ and $\RSYT(n)$.
\end{thm}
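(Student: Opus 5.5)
This theorem is a direct composition of two results already established, so the plan is to combine them. First, Theorem~\ref{thm:path2perm_map} shows that $\alpha:\Mo_n\to\MoP_n$ is a bijection, so its inverse $\alpha^{-1}:\MoP_n\to\Mo_n$ is well defined and bijective. Second, Theorem~\ref{thm:perm3syt_map} shows that $\widetilde\Phi:\Mo_n\to\SYT_{\le3}(n)$ is a bijection. A composition of bijections is a bijection, so $\widetilde\Phi\circ\alpha^{-1}$ is a bijection $\MoP_n\to\SYT_{\le3}(n)$.

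For the restriction statement, I will check that each factor maps the Riordan subset onto the corresponding Riordan subset. By Theorem~\ref{thm:path2perm_map}, $\alpha$ restricts to a bijection $\Rio_n\to\RioP_n$. Since $\alpha$ is injective on all of $\Mo_n$, this gives $\alpha^{-1}(\RioP_n)=\Rio_n$. By definition \eqref{eq:mapPhi_tilde}, $\widetilde\Phi$ agrees with $\Phi$ on $\Rio_n$. By Theorem~\ref{thm:main_bijection}, $\Phi:\Rio_n\to\RSYT(n)$ is a bijection. Hence the restriction of $\widetilde\Phi\circ\alpha^{-1}$ to $\RioP_n$ equals $\Phi\circ(\alpha|_{\Rio_n})^{-1}$, which is a bijection onto $\RSYT(n)$.

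A further observation is that the complements also correspond. By \eqref{eq:fp_preserving}, $\alpha^{-1}$ maps $\MoP_n\setminus\RioP_n$ onto $\Mo_n\setminus\Rio_n$. By the proof of Theorem~\ref{thm:perm3syt_map}, $\widetilde\Phi$ maps $\Mo_n\setminus\Rio_n$ onto $\SYT_{\le3}(n)\setminus\RSYT(n)$, via $\varrho\circ\Phi\circ\hat\gamma$. This is not needed for the statement, but it confirms that the partition into Riordan and non-Riordan parts is respected on both sides.

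I expect no real obstacle here. The only point requiring care is to state explicitly that the restriction of $\alpha^{-1}$ to $\RioP_n$ lands in $\Rio_n$, since Theorem~\ref{thm:path2perm_map} is phrased in terms of $\alpha$ rather than its inverse; injectivity of $\alpha$ on $\Mo_n$ settles this.
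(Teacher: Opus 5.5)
Your proposal is correct and follows the paper, which obtains this theorem directly by composing the bijections of Theorem~\ref{thm:path2perm_map} and Theorem~\ref{thm:perm3syt_map}. Your explicit checks that $\alpha^{-1}(\RioP_n)=\Rio_n$ and that $\widetilde\Phi$ coincides with $\Phi$ on $\Rio_n$ just spell out the details of that composition.
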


The nine Motzkin objects (Motzkin paths, restricted $321$-avoiding permutations, and $\SYT$ of height at most 3) for $n=4$ are all listed in Table~\ref{tab:path_perm_syt}. The last three rows of the table show the corresponding Riordan objects.

\medskip
The bijections presented in this paper can be naturally extended, via Dyck paths, to include other sets of permutations and $\SYT$ counted by the Riordan numbers. 

\begin{thm} \label{thm:riordan_families}
The following Riordan families are in bijection:
\begin{enumerate}[leftmargin=25pt, itemsep=3pt, label=$(\alph*)$]
\item Dyck paths of semilength $n$ with $k$ peaks and no singleton ascent;
\item Dyck paths of semilength $n$ with $k$ peaks and no singleton descent;
\item 321-avoiding permutations on $[n]$ with $k$ $\LR$ maxima, all of which are descents;
\item 312-avoiding permutations on $[n]$ with $k$ $\LR$ maxima, all of which are descents;
\item Riordan paths of length $n$ with $k$ $\Dd$-steps $($and thus $k$ $\Uu$-steps$)$;
\item tableaux $T\in\RSYT(n)$ with $\lambda_2(T)+\chi(T)=k$, where $\chi(T)=1$ if the last entry of row~1 of $T$ exceeds the last entry of row~2, and $\chi(T)=0$ otherwise\footnote{Equivalently, the second row has length $k$ if $k\equiv n\!\pmod 2$, and $k-1$ if $k\not\equiv n\!\pmod 2$.};
\item $\SYT$ of shape $(k,k,1^{n-2k})$;
\item derangements of size $n$ having genus $0$ and $k$ cycles;
\end{enumerate}
\end{thm}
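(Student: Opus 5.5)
The plan is to place the Riordan paths (e), the permutations (c), and the Dyck paths (a) in the center, and to attach every other family to this core by bijections that track the statistic $k$. For (c)$\leftrightarrow$(e) I will use Theorem~\ref{thm:path2perm_map} directly. The set in (c) is $\Rio_n$, and under $\alpha$ the $\Dd$-steps of $\alpha(\pi)$ are indexed by $D(\pi)$. For $\pi\in\Rio_n$ there are no fixed points, so $D(\pi)=\LR(\pi)$, and the number of $\Dd$-steps is therefore $\lrm(\pi)=k$. For (c)$\leftrightarrow$(f) I will use Theorem~\ref{thm:main_bijection} and read off the statistic from the proof of Lemma~\ref{lem:legality}. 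The tableau $P(\theta(\pi))$ has $\lambda_2=k$, move (M1) does not change row~2, and (M2) is applied exactly when $k\not\equiv n \pmod 2$. In that case it reduces $\lambda_2$ to $k-1$ and makes $\chi=1$; otherwise $\chi=0$, by the closing remark of that proof. Hence $\lambda_2(\Phi(\pi))+\chi(\Phi(\pi))=k$, and the footnote version follows as well.

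For (e)$\leftrightarrow$(a) I will use the standard substitution from Riordan paths to Dyck paths. Write the Dyck path of semilength $n$ as a sequence of ascent runs and descent runs. I will map a Riordan path to a Dyck path so that each $\Uu$ of the Riordan path opens a new run, each $\Hh$ lengthens the current run, and each $\Dd$ closes a peak. The concrete choice I will try first is the classical one: $\Uu\mapsto u$, $\Dd\mapsto d$, $\Hh\mapsto$ \emph{extend}. More precisely, a Motzkin path with $k$ up-steps corresponds to a Dyck path with $k$ peaks via the known Motzkin/Dyck correspondence of Deutsch, under which Dyck paths with no $uu$ factor are excluded. I will adjust this so that the condition ``no $\Hh$ on the $x$-axis'' matches ``no singleton ascent''. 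Next, (a)$\leftrightarrow$(b) is the reversal of a Dyck path, read backwards with up and down interchanged. This swaps ascents and descents and preserves the number of peaks.

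For (c)$\leftrightarrow$(d) I will use the Simion--Schmidt/Krattenthaler type bijection that fixes the LR maxima (values and positions) and rearranges the remaining entries. In the 321-avoiding case the non-maxima are increasing; in the 312-avoiding case they fill the gaps in the unique way that avoids 312. This keeps descents at LR maxima, because the entry after each maximum is still smaller than it. For (g), SYT of shape $(k,k,1^{n-2k})$, and for (h), genus-zero derangements with $k$ cycles, I will route through Dyck paths with $k$ peaks. Genus-zero permutations correspond to noncrossing partitions, with cycles corresponding to blocks. Derangements correspond to partitions with no singleton blocks, and under the usual noncrossing-partition/Dyck bijection these become Dyck paths with no singleton ascent and with $k$ peaks. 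For (g) I will use the hook-type decomposition: the first two rows of length $k$ form a two-row SYT, i.e.\ a Dyck-like structure, and the column of $n-2k$ cells records extra letters. I expect this to match (b) via a known bijection from the literature.

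The main obstacle will be making the Dyck-path links explicit and statistic-preserving, especially (e)$\leftrightarrow$(a) and (g)$\leftrightarrow$(a)/(b), where I need a precise rule rather than a citation. For (e)$\leftrightarrow$(a), I will first check small cases, e.g.\ the three Riordan paths of length 4 against the Dyck paths of semilength 4 with no singleton ascent, and then formalize the rule. For (g), I will check the count $\sum_k f^{(k,k,1^{n-2k})}=R_n$ via the hook-length formula, so that at least the enumeration by $k$ is confirmed.
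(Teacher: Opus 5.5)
Your links (c)$\leftrightarrow$(e) via $\alpha$, (c)$\leftrightarrow$(f) via $\Phi$ (reading $\lambda_2+\chi=k$ off Lemma~\ref{lem:legality}), (c)$\leftrightarrow$(d), (a)$\leftrightarrow$(b), and (h) via noncrossing partitions are essentially the paper's and are fine. The genuine gap is that your plan never connects the Dyck-path families (a), (b), (g), (h) to the families (c)--(f). The only bridge you propose is (e)$\leftrightarrow$(a), and it is never defined. Taken literally, $\Uu\mapsto u$, $\Dd\mapsto d$, $\Hh\mapsto$ ``lengthen the current run'' produces a word of only $n$ steps, not a Dyck path of semilength $n$. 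An appeal to a Motzkin/Dyck correspondence that you will ``adjust'' after checking small cases is not an argument, so your proof splits into two unconnected halves.

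The paper joins the halves at (b)$\leftrightarrow$(c) with the sink-or-float (Krattenthaler) bijection, and the verification is short. Let $\pi\in\mathcal{S}_n(321)$ have $\LR$ maxima $m_1<\dots<m_k$ at positions $1=i_1<\dots<i_k$. Put $m_0=0$ and $i_{k+1}=n+1$, and send $\pi$ to
\[ \Uu^{m_1-m_0}\Dd^{i_2-i_1}\Uu^{m_2-m_1}\Dd^{i_3-i_2}\cdots\Uu^{m_k-m_{k-1}}\Dd^{i_{k+1}-i_k}. \]
This is a bijection onto Dyck paths of semilength $n$, because a $321$-avoiding permutation is determined by the values and positions of its $\LR$ maxima. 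The image has $k$ peaks, and its $j$-th descent is a singleton iff $i_{j+1}=i_j+1$. That happens iff $m_j$ is immediately followed by another $\LR$ maximum or sits in position $n$, i.e.\ iff $m_j$ is not a descent. Hence (c) corresponds exactly to (b), with $k$ preserved.

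If you prefer a direct (e)$\leftrightarrow$(a) link, go through noncrossing partitions of $[n]$ with $k$ blocks and no singletons. Label $i$ by $\Uu$, $\Dd$, or $\Hh$ according as it is the minimum, the maximum, or a middle element of its block. Noncrossingness forces each middle element into the innermost open block, so this map is invertible onto Riordan paths with $k$ $\Uu$-steps. Blocks then correspond to ascents of the Dyck path, which gives (a).

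Item (g) is also left open. The ``known bijection from the literature'' is not identified. Checking $\sum_k f^{(k,k,1^{n-2k})}=R_n$ does not give the refinement by $k$ that the theorem asserts. The paper uses the explicit bijection $\varphi$ between (a) and (g) from \cite[Prop.~2.3]{GMTW20}. A self-contained equinumerosity argument would need the per-$k$ identity
\[ f^{(k,k,1^{n-2k})}=\frac{n!}{(n-k+1)(n-k)\,k!\,(k-1)!\,(n-2k)!}=\frac{1}{n+1}\binom{n+1}{k}\binom{n-k-1}{k-1}, \]
together with a proof that the right-hand side counts Riordan paths of length $n$ with $k$ $\Uu$-steps.
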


\begin{remark}
The map $\Phi$ carries $\lrm$ to the statistic $\lambda_2+\chi$ on $\RSYT(n)$. Indeed, for $\pi\in\Rio_n$ with $k=\lrm(\pi)$, the auxiliary tableau $P(\theta(\pi))$ has second row of length $k$ by Proposition~\ref{prop:aux}, and the parity correction shortens it by one exactly when $k\not\equiv n\!\pmod 2$, that is, exactly when (M2) is applied. By the last paragraph of the proof of Lemma~\ref{lem:legality}, (M2) is applied if and only if the last entry of row~1 of $\Phi(\pi)$ exceeds the last entry of row~2. Hence $\lambda_2(\Phi(\pi))+\chi(\Phi(\pi))=k$. The second row length alone does not determine $k$: permutations with $k$ $\LR$ maxima ($k\equiv n$) and with $k+1$ $\LR$ maxima ($k+1\not\equiv n$) both map to tableaux with $\lambda_2=k$.
\end{remark}

\tikzstyle{block} = [rectangle, draw, fill=gray!10, text width=14em, text centered, rounded corners, minimum height=3em]
\tikzstyle{line} = [draw, -latex]

\begin{figure}[ht!]
\centering \small
\begin{tikzpicture}[node distance = 6em, auto]
  \node [block] (RPath) {$n$-Riordan paths with $k$ $\Dd$-steps};
  \node [block, below of=RPath] (RPerm) {321-avoiding perms on $[n]$ with $k$ $\LR$ maxima, all of which are descents};
  \node [block, below of=RPerm] (Dyck) {$n$-Dyck paths with $k$ peaks and no singleton ascent/descent};
  \node [block, below of=Dyck] (genus0) {derangements on $[n]$ with genus $0$ and $k$ cycles};
  \node [block, right of=RPerm,node distance=20em] (R-SYT) {$T\in\RSYT(n)$ with $\lambda_2(T)+\chi(T)=k$};
  \node [block, below of=R-SYT] (SYTFlag) {$\SYT$ of shape $(k,k,1^{n-2k})$};
  \path [line] (RPerm) -- node {$\Phi$}(R-SYT);
  \path [line] (RPerm) -- node[below] {\scriptsize Thm~\ref{thm:main_bijection}}(R-SYT);
  \path [line] (RPerm) --  node {$\alpha$} (RPath);
  \path [line] (RPerm) --  node[right] {\scriptsize Thm~\ref{thm:path2perm_map}} (RPath);
  \path [line] (Dyck) -- node {SoF}(RPerm); 
  \path [line] (Dyck) --  node[left] {NC} (genus0);
  \path [line] (Dyck) -- node {$\varphi$}(SYTFlag);
  \path [line] (Dyck) -- node[below] {\cite{GMTW20}}(SYTFlag);
  \end{tikzpicture}
  \caption{Bijections.}
  \label{fig:bijflow}
\end{figure}

The families listed in Theorem~\ref{thm:riordan_families} are bijectively connected through the maps described in Figure~\ref{fig:bijflow}. The bijection between (a) and (b) is the obvious one: swap $\Uu$'s and $\Dd$'s and read the resulting word backwards. For example, $\Uu\Uu\Dd\Uu\Uu\Dd\Dd\Dd$ corresponds to $\Uu\Uu\Uu\Dd\Dd\Uu\Dd\Dd$. The bijection between (c) and (d) is the well-known bijection $\psi:S_n(312)\to S_n(321)$ by Simion and Schmidt~\cite{SiSch85}, and the map between (b) and (c) is the one sometimes referred to as the sink-or-float bijection (see e.g.\ Vella~\cite{Vella03}), accredited to Krattenthaler~\cite{Kratt01} and E.~Deutsch. The bijection between (b) and (h) is essentially the known bijection between $n$-Dyck paths and noncrossing partitions of $[n]$ (see e.g.\ Callan~\cite{Callan08}). Finally, an explicit bijection $\varphi$ between (a) and (g) is given in \cite[Prop.~2.3]{GMTW20}.

\begin{figure}[ht!]
\tikz[scale=0.55]{
\plotpermutation{4,1,6,2,3,5,9,7,10,8}
\foreach [count=\i] \y in {4,3,6,5,2,1,9,8,10,7} {%
  \draw[right=14pt,above=14pt,red!70!black] (\i,\y) circle (\rad);
  }
\draw[gray,dotted] (1,1) -- (11,11);
\draw[very thick, blue] (1,1) -- (1,5) -- (3,5) -- (3,7) -- (7,7) -- (7,10) -- (9,10) -- (9,11) -- (11,11);
\foreach \y in {4,6,9,10} {\node[above=-1pt] at (-0.2,\y) {\small $\Dd$};}
\foreach \y in {1,2,7,8} {\node[above=-1pt] at (-0.2,\y) {\small $\Uu$};}
\foreach \y in {3,5} {\node[above=-1pt] at (-0.2,\y) {\small $\Hh$};}
\foreach [count=\i] \x in {9,7,7,7,3,3,1,1,1,1} {%
  \node[gray,above=7pt,left=-1pt] at (\x,11-\i) {\tiny \i};
  }
\foreach \x/\y in {1/9,2/7,5/3,7/1}{%
  \node[above=0pt] at (\y+0.5,11) {\scriptsize \x};
  }
}
\caption{Unifying example.}
\label{fig:uberExample}
\end{figure}

Most of these bijections can be nicely illustrated with a single picture. For example, for $n=10$ and $k=4$, consider the permutation $\pi = \perm{4,1,6,2,3,5,9,7,10,8}$, which belongs to the set in (c). Figure~\ref{fig:uberExample} shows the corresponding objects:

\begin{itemize}
\item Dyck path $P_1 = \Uu\Uu\Uu\Uu\Dd\Dd\Uu\Uu\Dd\Dd\Dd\Dd\Uu\Uu\Uu\Dd\Dd\Uu\Dd\Dd$ in (b).
\item Dyck path $P_2 = \Uu\Uu\Dd\Uu\Uu\Dd\Dd\Dd\Uu\Uu\Uu\Uu\Dd\Dd\Uu\Uu\Dd\Dd\Dd\Dd$ in (a).
\item 312-avoiding permutation $\perm{4,3,6,5,2,1,9,8,10,7}$ marked with red circles.
\item Riordan path $\Uu\Uu\Hh\Dd\Hh\Dd\Uu\Uu\Dd\Dd$.
\item Element of $\RSYT$ via our map $\Phi$: 
\[ \small \young(1238,457A,69)\,. \]
\item To obtain a $\SYT$ of shape $(4,4,1^2)$ from $P_2$, we follow the map $\varphi$ from \cite[Section~2]{GMTW20} and number its $\Dd$-steps from left to right, labeling the $\Uu$-steps at the peaks with the number of the matching $\Dd$-step:
\[ \Uu\;\Uu_{_{\color{blue}1}}\Dd_{\lab{1}}\Uu\;\Uu_{_{\color{blue}2}}\Dd_{\lab{2}}\Dd_{\lab{3}}\Dd_{\lab{4}}\Uu\;\Uu\;\Uu\;\Uu_{_{\color{blue}5}}\Dd_{\lab{5}}\Dd_{\lab{6}}\Uu\;\Uu_{_{\color{blue}7}}\Dd_{\lab{7}}\Dd_{\lab{8}}\Dd_{\lab{9}}\Dd_{\lab{A}}. \]
We then label the remaining $\Uu$-step from top to bottom on each ascent in a greedy fashion
\[ \Uu_{\lab{3}}\Uu_{_{\color{blue}1}}\Dd\;\Uu_{\lab{4}}\Uu_{_{\color{blue}2}}\Dd\;\Dd\;\Dd\;\Uu_{\lab{9}}\Uu_{\lab{8}}\Uu_{\lab{6}}\Uu_{_{\color{blue}5}}\Dd\;\Dd\;\Uu_{\lab{A}}\Uu_{_{\color{blue}7}}\Dd\;\Dd\;\Dd\;\Dd, \]
and read the partition $13 \,|\, 24 \,|\, 5689 \,|\, 7A$. This is then turned into the $\SYT$
\[ \small \young(1257,346A,8,9) \]
by trimming the blocks of the partition and moving the leftovers to the first column.
\item Finally, it is known (see Dulucq and Simion~\cite[Lemma~2.1]{DS98}) that a permutation on $[n]$ has genus 0 if and only if its cycle decomposition gives a noncrossing partition of $[n]$. From the Dyck path $P_2$ we obtain a noncrossing partition by labeling each $\Uu$-step with the number of its matching $\Dd$-step, and using the ascents as blocks: 
\[ 14 \,|\, 23 \,|\, 569A \,|\, 78. \]
The corresponding derangement of genus 0 is the permutation with cycle decomposition $(14)(23)(569A)(78)$, which in one-line notation becomes $\perm{4,3,2,1,6,9,8,7,10,5}$.
\end{itemize}
  

\end{document}